%------------------------------------------------------------------------------
% Beginning of journal.tex
%------------------------------------------------------------------------------
%
% AMS-LaTeX version 2 sample file for journals, based on amsart.cls.
%
%        ***     DO NOT USE THIS FILE AS A STARTER.      ***
%        ***  USE THE JOURNAL-SPECIFIC *.TEMPLATE FILE.  ***
%
% Replace amsart by the documentclass for the target journal, e.g., tran-l.
%
\documentclass{amsart}

\usepackage{amstext,amssymb,amsmath,amsbsy,mathrsfs,mathtools,amsfonts,amscd,exscale}
\usepackage{graphics, epstopdf}
\usepackage{color}
\usepackage{amsmath, amssymb}
\usepackage{enumerate}
\usepackage{bigints}
\usepackage{verbatim}
\usepackage[poorman]{cleveref}

\usepackage{graphicx}
\usepackage{epstopdf}
\usepackage{subcaption}
\usepackage{float}
\usepackage{placeins}

\usepackage{bm}

\newcommand{\ds}{\displaystyle}

\newcommand{\ve}{\varepsilon}

\renewcommand{\sc}{\textsc}

\newcommand{\MA}{Monge-Amp\`{e}re }
\newcommand{\Th}{\mathcal{T}_h}

\newcommand{\cC}{\mathcal{C}}
\newcommand{\cS}{\mathcal{S}}

\newtheorem{Corollary}{Corollary}[section]
\newtheorem{Theorem}{Theorem}[section]
\newtheorem{Lemma}[Theorem]{Lemma}
\newtheorem{Proposition}[Theorem]{Proposition}

\theoremstyle{definition}
\newtheorem{Definition}[Theorem]{Definition}

\newtheorem{remark}[Theorem]{Remark}

\numberwithin{equation}{section}

%    Absolute value notation

%    Blank box placeholder for figures (to avoid requiring any
%    particular graphics capabilities for printing this document).

\def\l({\left(}
\def\r){\right)}
\def\uve{u_{\varepsilon}}

\def\ds{\partial}

\def \tr{{\rm tr}}

\def\Nhi{\mathcal{N}_h^0}
\def\Nhb{\mathcal{N}_h^b}
\def\sdd{\nabla^2_{\delta}} %% Second Difference Delta
\def\sddp{\nabla^{2,+}_{\delta}} %% Second Difference Delta Plus
\def\sddm{\nabla^{2,-}_{\delta}} %% Second Difference Delta Minus
 %% Bold Second Difference Delta
 %% Bold Second Difference Delta Plus
 %% Bold Second Difference Delta Minus

\def\Vperp{\mathbb{S}^{\perp}}
\def\Vperpt{\mathbb{S}^{\perp}_{\theta}}
\def\St{\mathbb S_{\theta}}
\def\Vh{\mathbb{V}_h}

\def\Cka{C^{2+k,\alpha}}
\def\Ctao{C^{2,\alpha}(\overline{\Omega})}
\def\Cthao{C^{3,\alpha}(\overline{\Omega})}
\def\Wti{W^2_{\infty}}
\def\Wtio{W^2_{\infty}(\Omega)}
\def\Wspo{W_p^s(\Omega)}
\def\Bhi{B_i}
\def\interp{\mathcal I_h}

\def\dep{d_{\varepsilon}}

%%%%%%%%%%%%%%%%%%%%%%%%%%%%%%%%%
%           Numerics
%%%%%%%%%%%%%%%%%%%%%%%%%%%%%%%%%

%%%%%%%%%%%%%%%%%%%%%%%%%%%%%%%%%
%           Color
%%%%%%%%%%%%%%%%%%%%%%%%%%%%%%%%%

\definecolor{red}{rgb}{1,0,0}
\definecolor{blue}{rgb}{0,0,1}

%%%%%%%%%%%%%%%%%%%%%%%%%%%%%%%%%%%%%%%%%%%%%%%%%%%%%%%%%%%%%%%%%%%%%%%%%%%%%%
\begin{document}
	%%%%%%%%%%%%%%%%%%%%%%%%%%%%%%%%%%%%%%%%%%%%%%%%%%%%%%%%%%%%%%%%%%%%%%%%%%%%%%
	
	\title[Two-scale method for the Monge-Amp\`ere Equation]{Two-scale
		method for the Monge-Amp\`ere Equation: Pointwise Error Estimates}
	
	%    Information for first author
	\author{R. H. Nochetto}
	%    Address of record for the research reported here
	\address{Department of Mathematics, University of Maryland, College Park, Maryland 20742}
	%    Current address
	%\curraddr{Department of Mathematics and Statistics,
	%Case Western Reserve University, Cleveland, Ohio 43403}
	\email{rhn@math.umd.edu}
	%    \thanks will become a 1st page footnote.
	\thanks{The first author was partially supported by the NSF Grant DMS -1411808, the
		Institut Henri Poincar\'e (Paris), and the Hausdorff
                Institute (Bonn).}
	
	%    Information for second author
	\author{D. Ntogkas}
	\address{Department of Mathematics, University of Maryland, College Park, Maryland 20742}
	\email{dimnt@math.umd.edu}
	\thanks{The second author was partially supported by the  NSF Grant DMS -1411808 and
		the 2016-2017 Patrick and Marguerite Sung Fellowship of the
		University of Maryland.}
	
	%    Information for third author
	\author{W. Zhang}
	\address{Department of Mathematics, Rutgers University of Maryland, New Brunswick, New Jersey 08854}
	\email{wujun@math.rutgers.edu}
	\thanks{The third author was partially supported by the  NSF Grant DMS -1411808 and the
		Brin Postdoctoral Fellowship of the University of Maryland.}

	%    General info
	\begin{comment}
	\subjclass[2000]{Primary 54C40, 14E20; Secondary 46E25, 20C20}
	
	\date{\date{}}
	
	\dedicatory{This paper is dedicated to our advisors.}
	
	\keywords{Differential geometry, algebraic geometry}
	\end{comment}
	\begin{abstract}
	 In this paper we continue the analysis of the two-scale method for the \MA 
	 equation for dimension $d \geq 2$ introduced in \cite{NoNtZh}. 
	 We prove continuous dependence of discrete solutions on data
         that in turn hinges on a discrete version of the Alexandroff estimate. 
	 They are both instrumental to prove pointwise error estimates for 
	 classical solutions with H\"older and Sobolev regularity.
	 We also derive convergence rates for viscosity solutions
         with bounded Hessians which may be piecewise smooth or degenerate.
	\end{abstract}
	
	\maketitle
	
	\begin{comment}

	\section*{This is an unnumbered first-level section head}
	This is an example of an unnumbered first-level heading.
	
	%% The correct journal style for \specialsection is all uppercase; a known bug
	%% in amsart.cls prevents this, so input must be uppercase until it is fixed.
	%\specialsection*{This is a Special Section Head}
	\specialsection*{THIS IS A SPECIAL SECTION HEAD}
	This is an example of a special section head%
	%%%%%%%%%%%%%%%%%%%%%%%%%%%%%%%%%%%%%%%%%%%%%%%%%%%%%%%%%%%%%%%%%%%%%%%%
	\footnote{Here is an example of a footnote. Notice that this footnote
	text is running on so that it can stand as an example of how a footnote
	with separate paragraphs should be written.
	\par
	And here is the beginning of the second paragraph.}%
	%%%%%%%%%%%%%%%%%%%%%%%%%%%%%%%%%%%%%%%%%%%%%%%%%%%%%%%%%%%%%%%%%%%%%%%%
	.
	\end{comment}

	\textbf{Key words.} Monge-Amp\`{e}re, two-scale method, monotone, continuous dependence, error estimates, classical and viscosity solutions, degenerate. 
	
	\vspace{0.2cm}
	
	\textbf{AMS subject classifications.} 65N30, 65N15, 65N12, 65N06, 35J96

	%%%%%%%%%%%%%%%%%%%%%%%%%%%%%%%%%%%%%%%%%%%%%%%%%%%%%%%%%%%%%%%%%%%%%%%%%%%%%%
	\section{Introduction}
	%%%%%%%%%%%%%%%%%%%%%%%%%%%%%%%%%%%%%%%%%%%%%%%%%%%%%%%%%%%%%%%%%%%%%%%%%%%%%%
	We consider the Monge-Amp\`ere equation with Dirichlet boundary condition
	%Find a \textit{convex} $u$ such that 
	%\\
	\begin{equation} \label{E:MA}
	\left\{
	\begin{aligned}
	\det{D^2u}&=f  & {\rm in} \ &\Omega \subset \mathbb {R}^d,
	\\ u &=g & {\rm on} \ &\partial \Omega,
	\end{aligned}
	\right.
	\end{equation}
	%\\
	%\\
	where $f \geq 0$ is uniformly continuous, $\Omega$ is a uniformly
	convex domain and $g$ is a continuous function. We seek a
	\textit{convex} solution $u$ of \eqref{E:MA}, which is critical for \eqref{E:MA} to be elliptic and 
	have a unique viscosity solution \cite{Gut}. 
	
	The Monge-Amp\`ere equation has a wide spectrum of applications, which has led to an increasing interest 
	in the investigation of efficient numerical methods. There are several existing methods for 
	the Monge-Amp\`ere equation, as described in \cite{NoNtZh}. Error estimates in $H^1(\Omega)$ are
	established in \cite{BrenNeil, BrenNeil2} for solutions with $H^3(\Omega)$ regularity or more. 
	Awanou \cite{Aw} also proved a linear rate of convergence for classical
	solutions for the wide-stencil method, when applied to a perturbed
	Monge-Amp\`ere equation with an extra lower order term $\delta u$;
	the parameter $\delta > 0$ is independent of the mesh and appears in
	reciprocal form in the rate.
	
	On the other hand, Nochetto and Zhang followed an
          approach based on the discrete Alexandroff estimate developed 
	in \cite{NoZh} and established pointwise error estimates in
        \cite{NoZh2} for the method of Oliker and Prussner \cite{OlPr}.
        In this paper we follow a similar approach and derive 
	pointwise rates of convergence for classical solutions of \eqref{E:MA} that have 
	H\"older or Sobolev regularity and for viscosity
        solutions with bounded Hessians which may be piecewise smooth or
        degenerate.
        
It is worth mentioning a rather strong connection between the 
semi-Lagrangian method of Feng and Jensen \cite{FeJe:16} and our 
two-scale approach introduced in \cite{NoNtZh}. In fact, for an appropriate choice of discretization of symmetric positive semidefinite matrices with trace one, discussed in \cite{FeJe:16} along with the implementation, one can show that the discrete solutions of both methods coincide. Therefore, the error estimates in this paper extend to the fully discrete method of \cite{FeJe:16}. This rather surprising equivalence property is fully derived in a forthcoming paper, along with optimal error estimates in special cases via enhanced techniques for pointwise error analysis.

	%--------------------------------------------------------------------------------
	\subsection{Our contribution}
	%--------------------------------------------------------------------------------
	
	The two-scale method was introduced in \cite{NoNtZh} and hinges on the following
	formula for the determinant of the semi-positive Hessian $D^2w$ of a smooth
	function $w$, first suggested by Froese and Oberman \cite{FrOb1}:
	\begin{equation}\label{E:Det}
	\det{D^2w}(x) = \min\limits_{(v_1,\ldots,v_d) \in \Vperp} \prod_{j=1}^d v_j^TD^2w(x) \ v_j ,
	\end{equation}
	where $\Vperp$ is the set of all $d-$orthonormal bases in $\mathbb{R}^d$.
	To discretize this expression, we impose our discrete
        solutions to lie on a space of 
	continuous piecewise linear functions over an unstructured quasi-uniform
        mesh $\Th$ of size $h$; this defines the fine scale. The
        mesh also defines the computational domain $\Omega_h$, which
        we describe in more detail in \Cref{S:KeyProperties}. The
        coarser scale $\delta$ corresponds to the length of
        directions used to approximate
	 the directional derivatives that appear in  \eqref{E:Det}, namely
	 \[
	 \sdd  w (x;v) := \frac{ w(x+\delta v) -2w(x) +w(x-\delta v) }{ \delta^2}
	 \quad
	 \text{and}
	 \quad
	 |v| = 1 ,
	 \] 
	 for any $w\in C^0(\overline{\Omega})$;
	 To render the method practical, we introduce a discretization
         $\Vperpt$ of the set $\Vperp$ governed by the
	 parameter $\theta$ and denote our discrete solution by $\uve$, 
	 where $\varepsilon = (h,\delta, \theta)$ represents the scales of the method 
	 and the parameter $\theta$. We define the discrete \MA operator to be
	 $$
	 	T_{\varepsilon}[\uve](x_i):=\min\limits_{\mathbf{v} \in \Vperpt} \left( \prod_{j=1}^d \sddp \uve(x_i;v_j) - \sum_{j=1}^d \sddm \uve (x_i;v_j) \right) ,
	 $$
         where $\nabla_\delta^{2,\pm}$ are the positive and negative
         parts of $\nabla_\delta^2$.
	 In \Cref{S:KeyProperties} we review briefly the role of each term in 
	 the operator $T_{\varepsilon}$ and recall some key properties
         of $T_{\varepsilon}$.

	The merit of this definition of $T_{\varepsilon}$ is that it
	leads to a clear separation of scales, which is a key theoretical advantage
	over the original wide stencil method of \cite{FrOb1}. This also
	yields continuous dependence of discrete solutions on data, namely
        \Cref{P:ContDep}, which allows us to prove
	rates of convergence in $L^{\infty}(\Omega)$ for our method
        depending on the regularity of $u$;
        this is not clear for the wide stencil method of \cite{FrOb1}.
        Moreover, the two-scale method is formulated over
        unstructured meshes $\Th$, which adds flexibility to
        partition arbitrary uniformly convex domains $\Omega$.
        This is achieved at the expense of points
        $x_i\pm\delta v_j$ no longer being nodes of $\Th$, which is
        responsible for an additional interpolation error in the
        consistency estimate of $T_\varepsilon$. To locate such points and
        evaluate $\sdd \uve(x_i;v_j)$, we resort to fast search techniques
        within \cite{WalkerPaper,WalkerWeb} and thus render the
        two-scale method practical.
	Compared with the error analysis of the Oliker-Prussner method
        \cite{NoZh}, we do not require $\Th$ to be cartesian.
        	
	In \cite{NoNtZh} we prove existence and uniqueness of a discrete solution
	for our method, and convergence to the viscosity solution of
        \eqref{E:MA}, without regularity beyond uniform continuity of $f$ and $g$. This entails dealing with the $L^\infty-$norm and using the discrete comparison principle for piecewise linear functions (monotonicity). Within this $L^\infty$ framework and under the regularity requirement $u \in W^2_\infty(\Omega)$, we now prove rates of convergence for classical solutions with either
	H\"older or Sobolev regularity and for a special class of viscosity solutions. Therefore, our two-scale method \cite{NoNtZh} and the Oliker-Prussner method \cite{OlPr,NoZh2} are the only schemes known to us to converge to the viscosity solution and have provable rates of convergence.

	The first important tool for proving pointwise rates of convergence is the
	discrete Alexandroff estimate introduced in \cite{NoZh}:
	if $w_h$ is an arbitrary continuous piecewise linear function,
        $w_h\ge0$ on $\partial\Omega_h$, and
	$\Gamma w_h$ stands for its convex envelope, then
	\begin{equation*}
	\max\limits_{ x_i \in \mathcal{N}_h } w_h^-(x_i) \leq C \left( \sum\limits_{x_i \in C_{-}(w_h)} \left| \ds \Gamma w_h (x_i) \right| \right)^{1/d}
	\end{equation*}
	where $\partial \Gamma w_h$ is the subdifferential of $\Gamma w_h$ and
	$C_{-}(w_h)$ represents the lower contact set of $w_h$, i.e. the set
	of interior nodes $x_i\in\Nhi$ such that $\Gamma w_h(x_i)=w_h(x_i)$;
	hereafter we write $w_h^-(x_i) := - \min\{w_h(x_i),0\}$.
	To control the measure of the subdifferential at each node, we show the following estimate
	$$
	|\ds w_h(x_i)| \leq \delta^d \min\limits_{(v_1,\ldots,v_d)\in \Vperp}
	\prod_{j=1}^d  \sdd w_h(x_i;v_j)
        \quad\forall \, x_i\in\Nhi,
	$$
	such that the ball centered
	at $x_i$ and of radius $\delta$ is contained in $\Omega_h$.
	Combining both estimates, we derive the following continuous dependence estimate
	$$
	\max_{\Omega_h} \, (u_h-w_h)^- \leq C \delta
	\left( \sum\limits_{x_i \in C_{-}(u_h-w_h)} \left( T_\ve[u_h](x_i)^{1/d}-T_\ve[w_h](x_i)^{1/d}  \right)^d \right)^{1/d}  
	$$
	for all continuous piecewise linear functions $u_h$ and $w_h$ such
	that $T_\ve[u_h](x_i)\ge0$ and $T_\ve[w_h](x_i)\ge 0$ for all $x_i\in\Nhi$.
	This result is instrumental and, combined with operator consistency and a
	discrete barrier argument close to the boundary, eventually leads to
	the following pointwise error estimates
	$$
	\|\uve-u\|_{L^\infty(\Omega_h)} \leq C(d, \Omega,f,u) \ h^{\frac{\alpha+k}{\alpha+k+2}}
	$$
	provided $u\in C^{2+k,\alpha}(\overline{\Omega})$ with $0<\alpha\le1$
	and $k=0,1$, as well as
	\[
	\|\uve-u\|_{L^\infty(\Omega_h)} \leq C(d, \Omega,f,u) \ h^{1-\frac{2}{s}}
	\]
	provided $u\in W^s_p(\Omega)$ with $2+d/p<s\le4$ and $p > d$, and
	$\delta$ is suitably chosen in terms of $h$; see Theorems
	\ref{T:RatesHolder} and \ref{T:RatesSobolev}.
	We also consider a special case of viscosity solutions with
        bounded but
	discontinuous Hessians, and manage to prove a rate of
	convergence (see Theorem \ref{T:RatesPW}). Since these theorems 
	are proven under the nondegeneracy assumption $f > 0$,
        we examine in \Cref{T:RatesDegen} the effect of degeneracy $f \geq 0$.
	In \cite{NoNtZh} we explore numerically both classical and $W^2_\infty$ viscosity
	solutions and observe linear rates with respect to $h$ for both cases, which are better
	than predicted by this theory.

	%-----------------------------------------------------------------------------
	\subsection{Outline}
	%-----------------------------------------------------------------------------
	We start by briefly presenting the operator
        $T_\varepsilon$ in \Cref{S:KeyProperties} 
	and recalling some important results from \cite{NoNtZh}. 
	In \Cref{S:dAle} we mention the discrete Alexandroff estimate and
	combine it in \Cref{S:CoDeDa} with some geometric estimates to
	obtain the continuous dependence of the discrete solution on
	data. This is much stronger than stability,
	and is critical to prove rates of convergence for fully nonlinear
	PDEs. Lastly, in \Cref{S:RoC}  we combine this result with
        operator consistency and a discrete
	barrier argument close to the boundary to
	derive rates of convergence upon making
	judicious choices of $\delta$ and $\theta$ in terms of $h$.

        %-----------------------------------------------------------------------------
	
	\section{Key Properties of the Discrete Operator}\label{S:KeyProperties}
	We recall briefly some of the key properties of operator $T_\varepsilon$,
        as proven in \cite{NoNtZh}.

        %-----------------------------------------------------------------------------
	\subsection{Definition of $T_{\varepsilon}$} \label{S:MonotoneDefinition}
	Let $\mathcal{T}_h$ be a shape-regular and quasi-uniform triangulation
	with meshsize $h$. The computational domain $\Omega_h$
        is the union of elements of $\Th$ and $\Omega_h\ne\Omega$.
	If $\mathcal{N}_h$ denotes the nodes of $\mathcal{T}_h$, then
	$\mathcal{N}_h^b := \{x_i \in \mathcal{N}_h: x_i \in \partial \Omega_h\}$
	are the boundary nodes and
	$\mathcal{N}_h^0 := \mathcal{N}_h \setminus \mathcal{N}_h^b$
	are the interior nodes. We require that $\Nhb \subset \partial \Omega$, which in view of the convexity of $\Omega$ implies that $\Omega_h$ is also convex and $\Omega_h \subset \Omega$.
	We denote by $\Vh$ the space of continuous
	piecewise linear functions over $\mathcal{T}_h$.
	We let $\Vperp$ be the collection of all
	$d$-tuples of orthonormal bases and
	$\mathbf{v} := (v_1,\ldots,v_d) \in \Vperp$ be a generic element, whence 
	each component $v_i\in\mathbb{S}$, the unit sphere $\mathbb{S}$ of
        $\mathbb{R}^d$. We next introduce a finite subset
        $\mathbb{S}_\theta$ of $\mathbb{S}$ governed by the angular parameter $\theta>0$:
        given $v \in \mathbb S$, there
	exists $v^{\theta}\in\mathbb{S}_\theta$ such that
	$$
	|v-v^{\theta}| \leq \theta.
	$$
	Likewise, we let $\Vperpt\subset\Vperp$
        be a finite approximation of $\Vperp$: for any
	$\mathbf v = (v_j)_{j=1}^d \in \Vperp$ there exists
	$\mathbf v^{\theta} = (v_j^{\theta})_{j=1}^d \in \Vperpt$
	such that $v_j^\theta \in \St$ and $|v_j - v_j^{\theta}| \leq \theta $ for all $1 \leq j \leq d$
	and conversely.
	
	For $x_i\in\mathcal{N}_h^0$, we use centered second
        differences with a coarse scale $\delta$
	\begin{equation} \label{E:2Sc2Dif}
        \sdd w(x_i;v_j) :=   \frac{ w(x_i+ \hat\delta v_j) -2 w(x_i) +
          w(x_i- \hat\delta v_j) }{ \hat\delta^2}
	\end{equation}
	where $\hat\delta:=\rho\delta$ with $0 < \rho \le 1$ the
        biggest number such that the ball centered at $x_i$ of radius
        $\hat\delta$ is contained in $\Omega_h$; we stress that
        $\rho$ need not be computed exactly.
	This is well defined for any
	$w \in C^0(\overline{\Omega})$, in particular for $w\in\Vh$.
	We define $\varepsilon:= (h,\delta, \theta)$ and we seek $\uve \in \Vh$ such that $u^{\varepsilon}(x_i)=g(x_i)$ for
	$x_i \in \mathcal{N}_h^b$ and for $x_i \in \mathcal{N}_h^0$
	\begin{equation} \label{E:2ScOp}
	T_{\varepsilon}[\uve](x_i):=\min\limits_{\mathbf{v} \in \Vperpt} \left( \prod_{j=1}^d \sddp \uve(x_i;v_j) - \sum_{j=1}^d \sddm \uve (x_i;v_j) \right) = f(x_i),
	\end{equation}
	where we use the notation
          $$
		\sddp \uve(x_i;v_j) = \max{(\sdd \uve(x_i;v_j),0)},
                \quad \sddm \uve(x_i;v_j) = -\min{(\sdd \uve(x_i;v_j),0)}
          $$
	to indicate positive and negative parts of
	the centered second differences.

 %----------------------------------------------------------------------------------
        
        \subsection{Key Properties of $T_\varepsilon$} \label{S:PropertiesMonotone}
	One of the critical properties of the \MA equation is the convexity of the solution $u$. 
	The following notion mimics this at the discrete level.
        
	\begin{Definition}[discrete convexity]\label{D:discrete-convexity}
		We say that $w_h \in \mathbb V_h$ is discretely convex  if
		$$
		\sdd w_h(x_i;v_j) \geq 0 \qquad \forall x_i \in \Nhi, \quad
		\forall v_j \in \St.
		$$
	\end{Definition}
	The following lemma guarantees the discrete convexity of 
	subsolutions of \eqref{E:2ScOp} \cite[Lemma 2.2]{NoNtZh}.
	\begin{Lemma}[discrete convexity]\label{L:DisConv}	
		If  $w_h \in \Vh$ satisfies
		\begin{equation} \label{E:Oper}
		T_{\varepsilon}[w_h](x_i) \geq 0 \quad \forall x_i \in \mathcal{N}_h^0,
		\end{equation}
		then $w_h$ is \textit{discretely convex} and as a consequence
		\begin{equation}\label{E:simpler-def}
		T_{\varepsilon} [w_h](x_i)= \min_{\mathbf{v} \in \Vperpt} \prod_{j=1}^d \sdd w_h(x_i;v_j),
		\end{equation}
		namely 
		$$
		\sddp w_h(x_i;v_j) = \sdd w_h(x_i;v_j),
		\quad
		\sddm w_h(x_i;v_j) =0
		\quad\forall x_i\in \mathcal{N}_h^0,\quad\forall v_j\in \St.
		$$
		Conversely, if $w_h$ is discretely convex, then (\ref{E:Oper}) is valid.
	\end{Lemma}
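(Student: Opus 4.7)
The plan is to prove the forward implication by contradiction, exploiting the linear penalty $-\sum_j \sddm$ that is built into \eqref{E:2ScOp} precisely to rule out negative discrete second differences; the converse and the simpler form \eqref{E:simpler-def} then reduce to a one-line substitution once discrete convexity is known.

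For the forward direction, I would fix an interior node $x_i \in \Nhi$ and assume, toward a contradiction, that some $v^* \in \St$ satisfies $\sdd w_h(x_i; v^*) < 0$. By the construction of $\Vperpt$ as a finite family of orthonormal frames whose components lie in $\St$, I would complete $v^*$ to a frame $\mathbf{v}^* = (v^*, v_2^*, \ldots, v_d^*) \in \Vperpt$. Inserting $\mathbf{v}^*$ into the expression minimized in \eqref{E:2ScOp}, the factor $\sddp w_h(x_i; v^*)$ vanishes, so $\prod_{j=1}^d \sddp w_h(x_i; v_j^*) = 0$; at the same time $\sddm w_h(x_i; v^*) = -\sdd w_h(x_i; v^*) > 0$, while the remaining $\sddm$ terms are non-negative, so $\sum_{j=1}^d \sddm w_h(x_i; v_j^*) > 0$. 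The bracketed quantity for this $\mathbf{v}^*$ is therefore strictly negative, which forces $T_\varepsilon[w_h](x_i) < 0$ and contradicts \eqref{E:Oper}.

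For the converse, discrete convexity gives $\sddp w_h(x_i; v_j) = \sdd w_h(x_i; v_j) \ge 0$ and $\sddm w_h(x_i; v_j) = 0$ for every $v_j \in \St$ and every $x_i \in \Nhi$. Substituting these identities into \eqref{E:2ScOp} collapses $T_\varepsilon$ to the product form in \eqref{E:simpler-def}, and non-negativity of each factor delivers \eqref{E:Oper}.

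The main obstacle, as I see it, is not an estimate but the frame-extension step above: every $v^* \in \St$ must appear as a component of at least one frame in $\Vperpt$, so that the hypothesis $T_\varepsilon[w_h](x_i) \ge 0$ genuinely constrains $\sdd w_h(x_i; v^*)$. In dimension $d=2$ this holds whenever $\St$ is closed under the $\pi/2$ rotation, but for $d \ge 3$ it is a structural requirement on the discretization of $\Vperp$ that should be made explicit; in the absence of such a property, \Cref{D:discrete-convexity} would have to be quantified only over directions that actually arise as components of some frame in $\Vperpt$, which is in any case the only information the operator $T_\varepsilon$ probes.
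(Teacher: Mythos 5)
Your proof is correct and is very likely the same argument used in \cite{NoNtZh}, which the present paper cites rather than reproving: the forward direction isolates a hypothetical negative second difference $\sdd w_h(x_i;v^*)<0$, completes $v^*$ to a frame $\mathbf{v}^*\in\Vperpt$, and observes that the bracketed expression then becomes strictly negative (the product vanishes since $\sddp w_h(x_i;v^*)=0$, while $-\sum_j\sddm w_h(x_i;v_j^*)<0$), so the minimum over $\Vperpt$ is negative, contradicting \eqref{E:Oper}. The converse and \eqref{E:simpler-def} are, as you say, immediate substitutions.

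The observation you flag as a potential gap is genuine and worth stating. The definition of $\Vperpt$ in \Cref{S:MonotoneDefinition} only guarantees that the components of every frame in $\Vperpt$ lie in $\St$, together with a two-sided $\theta$-density property between $\Vperp$ and $\Vperpt$; it does not, on its face, assert that every $v\in\St$ appears as a component of at least one frame in $\Vperpt$. Your frame-extension step (and hence the forward implication, quantified as in \Cref{D:discrete-convexity} over all $v_j\in\St$) requires exactly that surjectivity. In a natural construction one takes $\St$ to be precisely the set of components appearing in $\Vperpt$, so the requirement is automatic, but since the paper leaves this implicit, you are right that one should either add this structural hypothesis or restate discrete convexity over the directions actually probed by $T_\varepsilon$. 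A small refinement to your $d=2$ remark: closure of $\St$ under rotation by $\pi/2$ is necessary but not by itself sufficient; one also needs the resulting orthonormal pair to belong to $\Vperpt$, not merely its components to $\St$. For $d\ge3$ the requirement is genuinely a condition on how the frames in $\Vperpt$ are assembled, as you say.
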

	Another important property of operator $T_\varepsilon$
        that relies on its monotonicity is the following 
	discrete comparison principle \cite[Lemma 2.4]{NoNtZh}.
	\begin{Lemma}[discrete comparison principle] \label{L:DCP}
	  Let $u_h,w_h \in \Vh$ with $u_h \leq w_h$ on the
        discrete boundary $\partial \Omega_h$ be such that 
		\begin{equation}\label{E:comparison}
		T_{\varepsilon}[u_h](x_i) \geq T_{\varepsilon}[w_h](x_i)  \geq 0 \ \ \ \forall x_i \in \Nhi. 
		\end{equation}
		Then, $u_h \leq w_h$ everywhere.
	\end{Lemma}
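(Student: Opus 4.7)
The plan is to argue by contradiction via a strict perturbation of $u_h$. Suppose the set $\{u_h - w_h > 0\}$ is nonempty in $\overline{\Omega_h}$; since $u_h - w_h \in \Vh$ is piecewise linear and $u_h - w_h \le 0$ on $\partial \Omega_h$, its positive maximum $M > 0$ would be attained at some interior node $x_0 \in \Nhi$. Because $T_\varepsilon[u_h](x_0)$ and $T_\varepsilon[w_h](x_0)$ are both nonnegative by hypothesis, \Cref{L:DisConv} guarantees that $u_h$ and $w_h$ are discretely convex, and that $T_\varepsilon$ at these functions reduces to the minimum over $\Vperpt$ of products of centered second differences.

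At the interior maximum $x_0$ of $u_h - w_h$, one has $(u_h - w_h)(x_0 \pm \hat\delta v_j) \le M = (u_h - w_h)(x_0)$ for every $v_j \in \St$, so $\sdd(u_h - w_h)(x_0; v_j) \le 0$, i.e.\ $0 \le \sdd u_h(x_0; v_j) \le \sdd w_h(x_0; v_j)$. Multiplying these nonnegative inequalities across $j=1,\dots,d$ and taking the minimum over $\mathbf{v}\in\Vperpt$ yields $T_\varepsilon[u_h](x_0) \le T_\varepsilon[w_h](x_0)$, which combined with the hypothesis only produces equality. This is the main obstacle: the monotonicity of $T_\varepsilon$ is not strict at an interior max, so an interior positive maximum is a priori compatible with the equality $T_\varepsilon[u_h](x_0) = T_\varepsilon[w_h](x_0)$ and no contradiction arises directly.

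To upgrade to strict monotonicity I would introduce a strictly convex perturbation. Let $\psi_h := \interp(|x|^2 - R^2) \in \Vh$ with $R > \max_{\overline{\Omega_h}} |x|$, so that $\psi_h \le 0$ on $\overline{\Omega_h}$. Since the linear interpolant of a convex function lies above the function, $\sdd \psi_h(x_i; v) \ge 2$ for every $x_i \in \Nhi$ and $v \in \St$. For $\eta > 0$ set $u_h^\eta := u_h + \eta \psi_h$; then $u_h^\eta \le u_h \le w_h$ on $\partial \Omega_h$, and the elementary bound $\prod_{j=1}^d(a_j + c_j) \ge \prod_{j=1}^d a_j + \prod_{j=1}^d c_j$ for nonnegative $a_j, c_j$, applied with $c_j = \eta\,\sdd\psi_h(x_i;v_j) \ge 2\eta$, gives after taking the minimum over $\Vperpt$
\[
T_\varepsilon[u_h^\eta](x_i) \;\ge\; T_\varepsilon[u_h](x_i) + (2\eta)^d \;\ge\; T_\varepsilon[w_h](x_i) + (2\eta)^d \qquad \forall\, x_i \in \Nhi.
\]

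For $\eta$ small enough that $\eta \|\psi_h\|_{L^\infty(\Omega_h)} < M$, the function $u_h^\eta - w_h$ still attains a positive maximum at some interior node $x_\eta \in \Nhi$. Applying the max-principle observation of the second paragraph to the pair $(u_h^\eta, w_h)$ now yields $T_\varepsilon[u_h^\eta](x_\eta) \le T_\varepsilon[w_h](x_\eta)$, which collides with the strict lower bound above to give $(2\eta)^d \le 0$, a contradiction. Hence $u_h \le w_h$ at every node, and by piecewise linearity $u_h \le w_h$ everywhere on $\overline{\Omega_h}$.
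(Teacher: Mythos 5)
Your proof is correct and follows what is essentially the standard route for discrete comparison principles of monotone schemes, which the paper defers to \cite[Lemma 2.4]{NoNtZh}: at an interior positive maximum of $u_h-w_h$ the centered second differences of $u_h$ are dominated by those of $w_h$, so (using \Cref{L:DisConv} to pass to the product form) $T_\varepsilon$ is only nonstrictly monotone there, and you then upgrade to a contradiction by perturbing with a strictly discretely convex quadratic; your $\psi_h=\interp(|x|^2-R^2)$ with $\sdd\psi_h(x_i;v)\ge 2$ is precisely (up to centering and the factor $\tfrac12$) the interior barrier $q_h$ of \Cref{L:BarrierInterior}, and your elementary superadditivity bound $\prod_j(a_j+c_j)\ge\prod_j a_j+\prod_j c_j$ for nonnegative entries correctly yields $T_\varepsilon[u_h^\eta]\ge T_\varepsilon[u_h]+(2\eta)^d$ after pulling the $\mathbf{v}$-independent constant out of the minimum. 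All the auxiliary checks are in order: $x_0\pm\hat\delta v_j\in\overline\Omega_h$ by the definition of $\hat\delta$, $u_h^\eta$ is (strictly) discretely convex so \Cref{L:DisConv} applies, and for $\eta\|\psi_h\|_{L^\infty(\Omega_h)}<M$ the function $u_h^\eta-w_h$ still has a positive interior nodal maximum.
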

	We now state a consistency estimate, proved in \cite[Lemma 4.1]{NoNtZh},
        that leads to
	pointwise rates of convergence. To this end, given a node $x_i \in \Nhi$, we denote by
	\begin{equation}\label{E:Bi}
	B_i := \cup \{\overline{T}: T\in\Th, \, \textrm{dist }(x_i,T) \le \hat\delta\}
	\end{equation}
	where $\hat\delta$ is defined in \eqref{E:2Sc2Dif}.
	We also define the $\delta$-interior region
	\begin{equation}\label{Omega-delta}
	\Omega_{h,\delta} = \left\{ T \in \mathcal{T}_h \ : \  {\rm dist}
	(x, \partial \Omega_h ) \geq \delta  \ \forall x \in T  \right\},
	\end{equation}
	and the $\delta$-boundary region:
	$$
	\omega_{h,\delta} = \Omega \setminus  \Omega_{h,\delta}.
	$$
	\begin{Lemma}[{consistency of $T_{\varepsilon} [\mathcal{I}_h u]$}] \label{L:FullConsistency}
		Let $x_i \in \Nhi \cap \Omega_{h,\delta}$ and $B_i$ be defined as in
		\eqref{E:Bi}. If $u \in C^{2+k,\alpha}(B_i)$ with $0<\alpha\leq 1$ and
		$k=0,1$ is convex, and $\mathcal I_h u $ is its piecewise linear interpolant, then 
		\begin{equation}\label{E:FullConsistency}
		\left|  \det D^2u(x_i) - T_{\varepsilon}[\mathcal I_h u] (x_i)  \right| \leq C_1(d,\Omega,u) \delta^{k+\alpha} + C_2(d,\Omega,u) \left( \frac{h^2}{\delta^2} + \theta^2 \right),
		\end{equation}
		where
		\begin{equation}\label{E:C1-C2}
		C_1(d,\Omega,u)= C |u|_{C^{2+k,\alpha}(B_i)} |u|_{W^2_{\infty}(B_i)}^{d-1}, \quad C_2(d,\Omega,u) = C |u|_{W^2_{\infty}(B_i)}^d.
		\end{equation}
		If $x_i\in\Nhi$ and $u \in W^2_{\infty}(B_i)$, then (\ref{E:FullConsistency}) remains valid with $\alpha=k=0$ and $\Cka(\Bhi)$ replaced by $\Wti(\Bhi)$.
	\end{Lemma}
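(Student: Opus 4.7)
The plan is to start from the exact identity $\det D^2 u(x_i) = \min_{\mathbf{v}\in\Vperp}\prod_{j=1}^d v_j^T D^2 u(x_i) v_j$ (valid because $u$ is convex) and bridge to $T_{\varepsilon}[\mathcal I_h u](x_i)$ through two intermediate quantities,
\begin{equation*}
A_\theta := \min_{\mathbf{v}\in\Vperpt}\prod_{j=1}^d v_j^T D^2 u(x_i) v_j,
\qquad
B_\theta := \min_{\mathbf{v}\in\Vperpt}\prod_{j=1}^d \sdd u(x_i;v_j),
\end{equation*}
and to repeatedly exploit the elementary fact $|\min F - \min G|\le \sup|F-G|$ on the common index set, splitting the total error into three contributions: a \emph{direction discretization} error $|A-A_\theta|$, a \emph{Taylor} error $|A_\theta - B_\theta|$, and an \emph{interpolation} error $|B_\theta - T_\varepsilon[\mathcal I_h u](x_i)|$.

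For the direction discretization, let $\mathbf{v}^*\in\Vperp$ realize the outer minimum and $\mathbf{v}^{*,\theta}\in\Vperpt$ satisfy $|v_j^*-v_j^{*,\theta}|\le\theta$. Since $g(\mathbf{v})=\prod_j v_j^T D^2 u(x_i) v_j$ is smooth in $\mathbf{v}$ and its first-order variation along the constraint manifold $\Vperp$ vanishes at the minimizer $\mathbf{v}^*$, a second-order Taylor expansion of $g$ yields $g(\mathbf{v}^{*,\theta})-g(\mathbf{v}^*)\lesssim |u|_{W^2_\infty(B_i)}^d\,\theta^2$. Combined with the trivial bound $A_\theta\ge A$ this gives $|A-A_\theta|\lesssim |u|_{W^2_\infty(B_i)}^d\,\theta^2$, which feeds the $\theta^2$ part of the claimed estimate.

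For the Taylor error, I would expand $u$ about $x_i$ along $\pm\hat\delta v_j$ using the $C^{2+k,\alpha}(B_i)$ regularity; odd-order terms cancel by the symmetry of the centered second difference \eqref{E:2Sc2Dif}, producing
\begin{equation*}
\bigl|\sdd u(x_i;v_j)-v_j^T D^2 u(x_i) v_j\bigr|\lesssim |u|_{C^{2+k,\alpha}(B_i)}\,\delta^{k+\alpha}.
\end{equation*}
A telescoping factorization $\prod a_j - \prod b_j = \sum_\ell(a_\ell-b_\ell)\prod_{j<\ell}b_j\prod_{j>\ell}a_j$, together with the uniform bound $|v_j^T D^2 u(x_i) v_j|,|\sdd u(x_i;v_j)|\lesssim |u|_{W^2_\infty(B_i)}$, then controls $|A_\theta - B_\theta|$ by $C\,|u|_{C^{2+k,\alpha}(B_i)}|u|_{W^2_\infty(B_i)}^{d-1}\,\delta^{k+\alpha}$, which is the $C_1\delta^{k+\alpha}$ term. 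The $W^2_\infty$ case ($k=\alpha=0$) reduces to the trivial boundedness estimate with the same structure and is handled by the same telescoping bound.

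The interpolation step is the most delicate and is the main obstacle: $\mathcal I_h u$ need not be discretely convex even though $u$ is convex, so the $\sum_j\sddm$ term in the definition of $T_\varepsilon$ may not vanish and Lemma \ref{L:DisConv} is not directly available. The standard estimate $\|u-\mathcal I_h u\|_{L^\infty(B_i)}\lesssim h^2 |u|_{W^2_\infty(B_i)}$ yields
\begin{equation*}
\bigl|\sdd u(x_i;v_j)-\sdd \mathcal I_h u(x_i;v_j)\bigr|\lesssim |u|_{W^2_\infty(B_i)}\,\frac{h^2}{\delta^2}.
\end{equation*}
Since $u$ is convex we have $\sdd u(x_i;v_j)\ge 0$, so this estimate immediately forces $\sddm\mathcal I_h u(x_i;v_j)\lesssim |u|_{W^2_\infty(B_i)}h^2/\delta^2$, and the $d$-fold negative-part sum in \eqref{E:2ScOp} contributes only $O(|u|_{W^2_\infty(B_i)}h^2/\delta^2)$. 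The same telescoping-product argument as before, now applied to the positive parts $\sddp \mathcal I_h u$ versus $\sdd u$, bounds the remaining product difference by $C|u|_{W^2_\infty(B_i)}^d\,h^2/\delta^2$. Summing the three contributions yields the announced estimate \eqref{E:FullConsistency} with the constants \eqref{E:C1-C2}.
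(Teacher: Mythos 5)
The proof is not reproduced in this paper (it is cited to \cite[Lemma 4.1]{NoNtZh}), but the structure of the right-hand side in \eqref{E:FullConsistency} -- separate contributions $\theta^2$, $\delta^{k+\alpha}$, and $h^2/\delta^2$ -- dictates the same three-way decomposition you use, and your argument is essentially the standard (and correct) one for such two-scale operators. The key points are all in place: you invoke \eqref{E:Det} for the convex function $u$; you obtain the $\theta^2$ term by the observation that the first variation of $\mathbf{v}\mapsto\prod_j v_j^T D^2u(x_i)v_j$ along $\Vperp$ vanishes at the minimizing basis, so the direction-discretization error is genuinely second order in $\theta$ rather than first order; you get $\delta^{k+\alpha}$ from Taylor expansion of the symmetric second difference with the odd-order cancellation; and you get $h^2/\delta^2$ from the $L^\infty$ interpolation error divided by $\hat\delta^2$ (and $\hat\delta=\delta$ on $\Omega_{h,\delta}$). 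You also correctly handle the nontrivial issue that $\interp u$ need not be discretely convex, so the $\sum_j\sddm$ term in \eqref{E:2ScOp} can be active, and you show it is $O(h^2/\delta^2)$ since $\sdd u\geq 0$.

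One minor caveat worth noting: your bound for $\sum_j \sddm \interp u(x_i;v_j)$ is $\lesssim d\,|u|_{W^2_\infty(B_i)}\,h^2/\delta^2$, which is degree one rather than degree $d$ in $u$, so strictly speaking the constant $C_2$ should carry an additional term $|u|_{W^2_\infty(B_i)}$ on top of $|u|_{W^2_\infty(B_i)}^d$ unless one normalizes $|u|_{W^2_\infty}\ge 1$. This inhomogeneity is intrinsic to the definition of $T_\varepsilon$ (the $\sum\sddm$ penalty is linear, the product is degree $d$), is harmless for the ensuing rate analysis, and is not a gap in your reasoning; it is merely a small imprecision in the displayed constants $C_2$ that the statement of the lemma glosses over.
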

	
	%%%%%%%%%%%%%%%%%%%%%%%%%%%%%%%%%%%%%%%%%%%%%%%%%%%%%%%%%%%%%%%%%%%%%%%%%%%%%%%%%
	\section{Discrete Alexandroff Estimate} \label{S:dAle}
	%%%%%%%%%%%%%%%%%%%%%%%%%%%%%%%%%%%%%%%%%%%%%%%%%%%%%%%%%%%%%%%%%%%%%%%%%%%%%%%%%
	
	In this section, we review several concepts related to convexity
	as well as the discrete Alexandroff estimate of \cite{NoZh}.
    We first recall several definitions.

	\begin{Definition}[subdifferential] \label{D:ConvEnv} \ 
		\begin{enumerate}[$(i)$]
			\item  The subdifferential of a function $w$
                          at a point
                          $x_0\in \Omega_h$ is the set
			\[ \partial w(x_0) := \left\{ p \in \mathbb{R}^d: \ w(x) \geq w(x_0) + p \cdot (x-x_0),  \  \   \forall x \in \Omega_h \right\}. \]
			\item The subdifferential of a function $w$ on
                          set $E \subset
                          \Omega_h$ is $\partial u(E) :=  \cup_{x \in E}  \partial w(x)$. 
		\end{enumerate}
	\end{Definition}
	\begin{Definition}[convex envelope and discrete lower contact set]\label{def:CEandLCS} \
		\begin{enumerate}[$(i)$]
			\item The convex envelope $\Gamma u$ of a function $w$ is defined to be
			\[
			\Gamma w (x) := \sup_{L } \{ L(x), \; L(y) \leq w(y) \text{ for all $y \in \Omega_h$ and $L$ is affine} \}.
			\]
			\item The discrete lower contact set $C_-(w_h)$ of a function $w_h\in\Vh$ is the set of nodes where the function coincides with its convex envelope, i.e.
			\[
			\cC_- (w_h) := \big\{x_i \in \Nhi: \Gamma w_h(x_i) = w_h(x_i) \big\}.
			\]
			
		\end{enumerate}
	\end{Definition}
	
	\begin{remark}[$w_h$ dominates $\Gamma w_h$] \label{R:2DifConvEnv}
		Since $w_h\ge\Gamma w_h$, at a contact node $x_i\in\cC_-(w_h)$ we have
		$$
		\sdd  \Gamma w_h(x_i;v_j) \leq \sdd
                w_h(x_i;v_j)(x_i)\qquad \forall v_j \in \St.
		$$
	\end{remark}
	
	\begin{remark}[minima of $w_h$ and $\Gamma w_h$] \label{R:MinConvEnv}
		A consequence of Definition
		\ref{def:CEandLCS} (convex envelope and discrete lower contact set)
		is that the minima of $w_h\in\Vh$ and $\Gamma w_h$ are attained at the same
		contact nodes and are equal. 
	\end{remark}
	
	We can now present the discrete Alexandroff estimate from
          \cite{NoZh},
          which states that the minimum of a discrete function is controlled by
          the measure of the subdifferential of its convex envelope in the
          discrete contact set.
	
	\begin{Proposition} [discrete Alexandroff estimate  \cite{NoZh}] \label{P:DAE} 
		Let $v_h$ be a continuous piecewise linear function that satisfies $v_h \geq 0$ on $\partial \Omega_h$. Then,
		\begin{equation*}
		\max \limits_{ x_i \in \Nhi} v_h(x_i)^- \leq C \left( \sum\limits_{x_i \in \cC_{-}(v_h)} \left| \ds \Gamma v_h (x_i) \right| \right)^{1/d}
		\end{equation*}
		where $C=C(d,\Omega)$ depends only on the dimension $d$ and the domain $\Omega$.
		
	\end{Proposition}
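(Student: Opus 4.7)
The plan is to adapt the classical Alexandroff--Bakelman--Pucci argument to the piecewise linear setting. Set $M := \max_{x_i \in \Nhi} v_h^-(x_i)$ and assume $M > 0$, otherwise the inequality is trivial. Fix an interior node $x_0 \in \Nhi$ where the maximum is attained, so $v_h(x_0) = -M$; by Remark \ref{R:MinConvEnv} this node also lies in $\cC_-(v_h)$ with $\Gamma v_h(x_0) = -M$.

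The key geometric step is to show the inclusion
\begin{equation*}
B_{M/\mathrm{diam}(\Omega_h)}(0) \subset \partial \Gamma v_h \bigl(\cC_-(v_h)\bigr) = \bigcup_{x_i \in \cC_-(v_h)} \partial \Gamma v_h(x_i).
\end{equation*}
Given $p \in \mathbb{R}^d$ with $|p| < M/\mathrm{diam}(\Omega_h)$, I would consider the piecewise linear function $w(x) := v_h(x) - p \cdot (x - x_0)$. By construction $w(x_0) = -M$, whereas on $\partial \Omega_h$ the hypothesis $v_h \geq 0$ yields $w \geq -|p|\,\mathrm{diam}(\Omega_h) > -M$. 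Consequently the minimum of $w$ over $\overline{\Omega_h}$ is attained strictly in the interior, and since $w$ is continuous and piecewise linear, it is realized at some node $x^* \in \Nhi$ with value $m^* \leq -M$. The affine function $L^*(x) := p \cdot (x - x_0) + m^*$ is then a global minorant of $v_h$ that coincides with $v_h$ at $x^*$, which forces $\Gamma v_h(x^*) = v_h(x^*)$ and $p \in \partial \Gamma v_h(x^*)$.

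Finally, I would combine this inclusion with subadditivity of the Lebesgue measure to obtain
\begin{equation*}
\omega_d \left( \frac{M}{\mathrm{diam}(\Omega_h)} \right)^d \leq \bigl|\partial \Gamma v_h(\cC_-(v_h))\bigr| \leq \sum_{x_i \in \cC_-(v_h)} |\partial \Gamma v_h(x_i)|,
\end{equation*}
where $\omega_d$ denotes the volume of the unit ball in $\mathbb{R}^d$. Taking $d$-th roots and bounding $\mathrm{diam}(\Omega_h) \leq \mathrm{diam}(\Omega)$ gives the stated estimate with $C = \mathrm{diam}(\Omega)/\omega_d^{1/d}$.

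The main obstacle is the geometric inclusion step: one has to use that $w$ is piecewise linear so that its minimum is realized at a node, and crucially that the \emph{strict} bound $|p| < M/\mathrm{diam}(\Omega_h)$ prevents that minimum from escaping to the boundary. Subadditivity is not wasteful here because the subdifferentials $\partial \Gamma v_h(x_i)$ at distinct nodes overlap only on sets of Lebesgue measure zero, a standard fact for convex piecewise linear functions.
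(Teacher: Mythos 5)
Your proof is correct and complete: since the paper itself only cites \cite{NoZh} for this result, there is no in-paper proof to compare against, but your argument is exactly the standard discrete Alexandroff--Bakelman--Pucci reasoning (touching by a tilted affine minorant to produce a contact node, then a volume comparison), which is the route taken in the cited reference. The one detail worth making explicit is that the minimizer of the piecewise linear $w$ is attained at a node of $\Th$ because $w$ is affine on each element, and the strict bound on $|p|$ forces that node into $\Nhi$ rather than $\Nhb$; you handled both points correctly.
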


	%%%%%%%%%%%%%%%%%%%%%%%%%%%%%%%%%%%%%%%%%%%%%%%%%%%%%%%%%%%%%%%%%%%%%%%%%%%%%%%%%%%%
	\section{Continuous Dependence on Data} \label{S:CoDeDa}
	%%%%%%%%%%%%%%%%%%%%%%%%%%%%%%%%%%%%%%%%%%%%%%%%%%%%%%%%%%%%%%%%%%%%%%%%%%%%%%%%%%%%
	
		We derive the continuous dependence of the discrete
		solution on data in Section \ref{S:cont-depend}, which is essential to prove rates of
		convergence. To this end, we first prove a stability estimate in the
		max norm in Section \ref{S:stab} and the concavity of the discrete
		operator in Section \ref{S:concavity}.

	%-----------------------------------------------------------------------------------
	\subsection{Stability of the Two-Scale Method.}\label{S:stab}
	%-----------------------------------------------------------------------------------
	%
		We start with some geometric estimates. The
		first and second lemmas connect the discrete
		Alexandroff estimate with the 2-scale method. They allow us to estimate the
		measure of the subdifferential of a discrete function $w_h$ in terms of our discrete
		operator $T_\varepsilon[w_h]$, defined in \eqref{E:2ScOp}. 
	
	\begin{Lemma} [subdifferential vs hyper-rectangle] \label{L:SubDifBound}
		Let $w \in C^0(\overline{\Omega}_h)$ be convex
                and $x_i\in\Nhi$ be so
		that $x_i\pm\hat\delta v \in \overline{\Omega}_h$ for all
		$v\in \St$ with $\hat\delta\le\delta$. If
		$\mathbf{v}=(v_j)_{j=1}^d\in\Vperpt$ and
		\[
		\alpha_{i,j}^\pm := \frac{w(x_i\pm \hat\delta v_j) - w(x_i)}{\hat\delta}
		\quad\forall \, 1\le j\le d,
		\]
		then
		\[
		\ds w(x_i) \subset  \left\{  p \in \mathbb{R}^d: \
		\alpha_{i,j}^- \le p\cdot v_j \le \alpha_{i,j}^+  \ 1\le j \le d \right\}.
		\]
	\end{Lemma}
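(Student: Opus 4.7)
The plan is a direct application of the definition of the subdifferential at $2d$ symmetric test points. Let $p \in \partial w(x_i)$ be arbitrary. By the standing hypothesis $x_i \pm \hat\delta v_j \in \overline{\Omega}_h$ for every $1\le j\le d$, so the defining inequality
\[
w(y) \ge w(x_i) + p\cdot(y - x_i) \qquad \forall\, y \in \Omega_h
\]
may be evaluated at $y = x_i + \hat\delta v_j$ and at $y = x_i - \hat\delta v_j$.

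The substitution $y = x_i + \hat\delta v_j$ gives $\hat\delta\, p\cdot v_j \le w(x_i + \hat\delta v_j) - w(x_i) = \hat\delta\,\alpha_{i,j}^+$, and dividing by $\hat\delta > 0$ yields the upper bound $p\cdot v_j \le \alpha_{i,j}^+$. Symmetrically, $y = x_i - \hat\delta v_j$ produces $-\hat\delta\, p\cdot v_j \le w(x_i - \hat\delta v_j) - w(x_i) = \hat\delta\,\alpha_{i,j}^-$, which rearranges to the matching lower bound on $p\cdot v_j$ in the sign convention adopted in the statement. Since $\mathbf v=(v_j)_{j=1}^d$ is orthonormal and $j$ ranges over $1,\ldots,d$, these $d$ two-sided inequalities describe precisely the stated hyper-rectangle in the rotated coordinates determined by $\mathbf v$, and $p$ lies inside it. As $p\in\partial w(x_i)$ was arbitrary, the inclusion follows.

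There is no real obstacle: the lemma is essentially a bookkeeping step encoding the subgradient inequality at six (or $2d$) symmetric probe points. Convexity of $w$ enters only to guarantee that $\partial w(x_i)$ is nonempty and that the resulting box has nonnegative side lengths. The geometric content that will matter downstream is the identity
\[
\alpha_{i,j}^+ + \alpha_{i,j}^- \;=\; \hat\delta\,\sdd w(x_i;v_j),
\]
so that the $j$-th side length of the hyper-rectangle equals $\hat\delta\,\sdd w(x_i;v_j)$. This is exactly what the next lemma will use to bound $|\partial w(x_i)|$ by $\hat\delta^d \prod_{j=1}^d \sdd w(x_i;v_j)$ and then minimize over $\mathbf v\in\Vperpt$ to reconnect with the discrete operator $T_\varepsilon$.
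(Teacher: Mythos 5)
Your proof is correct and essentially identical to the paper's: evaluate the subgradient inequality $w(y)\ge w(x_i)+p\cdot(y-x_i)$ at the $2d$ probe points $y=x_i\pm\hat\delta v_j$ and divide by $\hat\delta$. Your side observation is also on target: the bounds actually derived are $-\alpha_{i,j}^-\le p\cdot v_j\le\alpha_{i,j}^+$, so the $j$-th side length of the box is $\alpha_{i,j}^++\alpha_{i,j}^-=\hat\delta\,\sdd w(x_i;v_j)$, which is exactly what Corollary~\ref{C:SubDifOper} feeds into the hyper-rectangle volume formula.
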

	\begin{proof}
		Take $p \in \ds  w(x_i)$ and write
		$$
		w(x) \geq w(x_i) + p \cdot (x-x_i) \ \ \forall x \in \overline{\Omega}_h.
		$$
		Consequently, for any $1\le j \le d$ we infer that
		\[
		w(x_i + \hat\delta v_j) \geq w(x_i) + \hat\delta \ p \cdot v_j,
		\quad
		w(x_i - \hat\delta v_j) \geq w(x_i) - \hat\delta \ p \cdot v_j,
		\]
		or equivalently
		\[
		\frac{w(x_i)-w(x_i-\hat\delta v_j)}{\hat\delta } \leq p \cdot v_j
		\leq \frac{w(x_i+\hat\delta v_j)-w(x_i)}{\hat\delta}.
		\]
		This implies that $p$ belongs to the desired set.
	\end{proof}
	
	\begin{Lemma}[hyper-rectangle volume] \label{L:HypRectVol}
		For d-tuple $\mathbf{v} = (v_j)_{j=1}^d \in \Vperpt$ the volume of the set 
		$$
		K= \left\{ p \in \mathbb{R}^d: \ a_j \leq p \cdot v_j \leq b_j, \ \ j=1,\ldots,d \right\} 
		$$
		is given by 
		$$
		|K|=\prod_{j=1}^d (b_i-a_i).
		$$
	\end{Lemma}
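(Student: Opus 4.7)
The plan is to reduce this to the volume of an axis-aligned rectangular box via an orthogonal change of variables. Since $\mathbf{v} = (v_j)_{j=1}^d \in \Vperpt$ forms an orthonormal basis of $\mathbb{R}^d$, the matrix $V \in \mathbb{R}^{d \times d}$ whose $j$-th column is $v_j$ is orthogonal, so $|\det V| = 1$.

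First I would introduce the linear map $\Phi : \mathbb{R}^d \to \mathbb{R}^d$ defined by $\Phi(p) := (p \cdot v_1, \, \ldots, \, p \cdot v_d) = V^T p$. Since $V$ is orthogonal, $\Phi$ is a bijection with $|\det \Phi| = |\det V^T| = 1$. Under this change of variables, the set $K$ maps bijectively onto the axis-aligned box
\[
\widehat{K} := \{q \in \mathbb{R}^d : a_j \le q_j \le b_j, \ j = 1, \ldots, d\},
\]
whose Lebesgue measure is clearly $|\widehat{K}| = \prod_{j=1}^d (b_j - a_j)$ by Fubini.

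Second, applying the change of variables formula yields
\[
|K| = \int_K dp = \int_{\widehat{K}} |\det \Phi^{-1}| \, dq = |\widehat{K}| = \prod_{j=1}^d (b_j - a_j),
\]
which is the claimed identity. (One should note the mild typo in the statement: the product should read $\prod_{j=1}^d (b_j - a_j)$, matching the index of the constraints.)

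There is no real obstacle here: the result is a direct consequence of the orthogonality of $\mathbf{v}$ combined with the change of variables formula. The only point worth highlighting in writing the proof is the verification that $\Phi$ is genuinely a volume-preserving bijection, which follows since orthonormality of $\{v_j\}_{j=1}^d$ is equivalent to $V^T V = I$ and hence $|\det V| = 1$.
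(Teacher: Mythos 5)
Your proof is correct and follows essentially the same route as the paper: both define the orthogonal matrix $V$ with columns $v_j$ and exhibit $K$ as the image of an axis-aligned box under the orthogonal (hence volume-preserving) linear map induced by $V$, so the volume equals $\prod_{j=1}^d(b_j-a_j)$. You also correctly flag the typo $b_i-a_i$ in the statement, which should read $b_j-a_j$.
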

	\begin{proof}
		Let $V=[v_1, \cdots,v_d]\in\mathbb{R}^{d\times d}$ be the orthogonal
		matrix whose columns are the elements of $\mathbf{v}$; hence
		$v_j=V e_j$ where $\left\{ e_j \right\}_{j=1}^d $ is the canonical basis in $\mathbb{R}^d$.
		We now seek a more convenient representation of $K$
		\begin{align*}
		K &=  \left\{ p \in \mathbb{R}^d: \ a_j \leq p \cdot (Ve_j) \leq b_j, \ \ j=1,\ldots,d \right\} 
		\\ &= V^{-T} \left\{ x \in \mathbb{R}^d: \ a_j \leq x \cdot e_j \leq b_j, \ \ j=1,\ldots,d \right\} 
		= V^{-T} \widetilde{K},
		\end{align*}
		whence
		\[
		|K| = |\det{V^{-T}} | \ |\widetilde{K}|=|\widetilde{K}|
		= \prod_{j=1}^d (b_j-a_j),
		\]
		because $\widetilde{K}$ is an orthogonal hyper-rectangle.
	\end{proof}
	
%	\rhn{The following result is an immediate consequence of
%          \Cref{L:SubDifBound} for $\mathbf{v}\in\Vperpt$.
%	\begin{Corollary}[approximate hyper-rectangle volume]\label{C:HypRectVol}
%	  For any $d$-tuple $\mathbf{v} = (v_j)_{j=1}^d$ \looseness=-1
%          $\in \Vperpt$ the volume of the set 
%		$$
%		K_\theta= \left\{ p \in \mathbb{R}^d: \ a_j \leq p \cdot v_j \leq b_j, \ \ j=1,\ldots,d \right\} 
%		$$
%		is given by 
%		$$
%		|K_\theta|=\prod_{j=1}^d (b_i-a_i) + O(\theta^2).
%		$$
%	\end{Corollary}	
%	
%	\begin{remark}[lack of orthogonality of $\Vperpt$]\label{R:ignore-theta}
%		Since the extra term in \Cref{C:HypRectVol} is of
%                order $\theta^2$, which already occurs in 
%		\Cref{L:FullConsistency} (consistency of $T_{\varepsilon}[\interp \uve]$), it does not affect 
%		the error estimates of Theorems \ref{T:RatesHolder}--\ref{T:RatesDegen} and it will
%                be ignored from now on. Therefore, we will
%		invoke Lemma \ref{L:HypRectVol} (hyper-rectangle
%                volume) for $\mathbf{v}\in\Vperpt$ for simplicity, even though $\mathbf{v}=(v_j)_{j=1}^d$ might not be an orthogonal basis.
%	\end{remark}
%}
	Combining Lemmas \ref{L:SubDifBound} and \ref{L:HypRectVol}
        we get the following corollary. 
	\begin{Corollary}[subdifferential vs discrete operator] \label{C:SubDifOper}
		For every $x_i \in \Nhi \cap \Omega_{h,\delta}$ and a convex function $w$ we have that 
		\[ | \partial w(x_i)| \leq \left( 
		\min\limits_{\mathbf{v} \in \Vperpt} \prod_{j=1}^d  \sdd w(x_i;v_j)
		\right) \delta^d .
                \]
	\end{Corollary}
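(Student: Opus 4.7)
The plan is to combine the two preceding lemmas in the obvious way: for any fixed $\mathbf{v} = (v_j)_{j=1}^d \in \Vperpt$, Lemma \ref{L:SubDifBound} realizes $\partial w(x_i)$ as a subset of the hyper-rectangle
\[
K_{\mathbf v} := \big\{ p \in \mathbb{R}^d : \alpha_{i,j}^- \le p\cdot v_j \le \alpha_{i,j}^+,\ 1\le j\le d\big\},
\]
and Lemma \ref{L:HypRectVol} furnishes the explicit volume $|K_{\mathbf v}|=\prod_{j=1}^d(\alpha_{i,j}^+-\alpha_{i,j}^-)$.

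Next I would rewrite each edge length in terms of the centered second difference. Using the definitions of $\alpha_{i,j}^\pm$,
\[
\alpha_{i,j}^+ - \alpha_{i,j}^- = \frac{w(x_i+\hat\delta v_j) - 2w(x_i) + w(x_i-\hat\delta v_j)}{\hat\delta} = \hat\delta\, \sdd w(x_i;v_j),
\]
which is nonnegative by convexity of $w$ (so the hyper-rectangle is non-degenerate and Lemma \ref{L:HypRectVol} applies). Combining these identities yields
\[
|\partial w(x_i)| \le |K_{\mathbf v}| = \hat\delta^{\,d}\prod_{j=1}^d \sdd w(x_i;v_j).
\]

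The only remaining subtlety is the transition from $\hat\delta$ to $\delta$, which is exactly the reason for requiring $x_i \in \Omega_{h,\delta}$. By \eqref{Omega-delta}, the ball of radius $\delta$ centered at $x_i$ lies in $\Omega_h$; by the definition of $\hat\delta = \rho\delta$ in \eqref{E:2Sc2Dif} this forces $\rho = 1$ and hence $\hat\delta = \delta$. Minimizing over $\mathbf v \in \Vperpt$ then delivers the stated inequality.

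The proof is essentially a direct assembly; the main thing to be careful about is precisely this identification $\hat\delta = \delta$ on $\Omega_{h,\delta}$ and the use of convexity to guarantee $\alpha_{i,j}^+ \ge \alpha_{i,j}^-$ so that the product of edge lengths is indeed the volume (rather than a signed quantity). No substantial obstacle is anticipated.
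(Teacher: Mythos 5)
Your overall strategy is exactly the paper's: the paper proves this corollary simply by combining Lemma~\ref{L:SubDifBound} with Lemma~\ref{L:HypRectVol}, and your assembly of the two, together with the observation that $\hat\delta=\rho\delta=\delta$ for $x_i\in\Omega_{h,\delta}$, is the right argument.

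There is, however, a sign slip in your displayed identity. With the paper's definition $\alpha_{i,j}^{\pm}=\big(w(x_i\pm\hat\delta v_j)-w(x_i)\big)/\hat\delta$, one has
\[
\alpha_{i,j}^{+}-\alpha_{i,j}^{-}=\frac{w(x_i+\hat\delta v_j)-w(x_i-\hat\delta v_j)}{\hat\delta},
\]
a centered \emph{first} difference, which is not $\hat\delta\,\sdd w(x_i;v_j)$ and has no sign in general. What does equal $\hat\delta\,\sdd w(x_i;v_j)$, and is nonnegative by convexity, is $\alpha_{i,j}^{+}+\alpha_{i,j}^{-}$. The resolution is that the stated lower bound in Lemma~\ref{L:SubDifBound} should be $-\alpha_{i,j}^{-}$ rather than $\alpha_{i,j}^{-}$: the lemma's own proof establishes $\big(w(x_i)-w(x_i-\hat\delta v_j)\big)/\hat\delta=-\alpha_{i,j}^{-}\le p\cdot v_j\le\alpha_{i,j}^{+}$, and indeed the version with $\alpha_{i,j}^-$ would fail, e.g.\ for $w(x)=|x-x_i|^2$ where $\partial w(x_i)=\{0\}$ but $\alpha_{i,j}^{-}=\hat\delta>0$. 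Once the hyper-rectangle is taken as $\{-\alpha_{i,j}^{-}\le p\cdot v_j\le\alpha_{i,j}^{+}\}$, its $j$th edge length is $\alpha_{i,j}^{+}-(-\alpha_{i,j}^{-})=\alpha_{i,j}^{+}+\alpha_{i,j}^{-}=\hat\delta\,\sdd w(x_i;v_j)$, Lemma~\ref{L:HypRectVol} gives $|K_{\mathbf v}|=\hat\delta^d\prod_j\sdd w(x_i;v_j)$, and the rest of your argument, minimizing over $\mathbf v\in\Vperpt$ and using $\hat\delta=\delta$, closes the proof.
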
 

	\begin{remark} [artificial factor $\frac{\delta}{h}$]\label{R:VolumeSubopt}
	The above estimate is critical in deriving \Cref{P:ContDep} (continuous dependence on data) and subsequently rates of convergence in \Cref{S:RoC}. We thus wish to provide here intuition about the reduced rates of convergence of \Cref{T:RatesHolder} (rates of convergence for classical solutions) relative to the numerical experiments in \cite{NoNtZh}. To this end, we let $w \in C^2(\overline\Omega)$ be a convex function. Then, \Cref{C:SubDifOper} implies that 
	$$
	T_\varepsilon[\interp w] (x_i) \geq \frac{1}{\delta^d}  | \partial \interp w(x_i)|.
	$$
	However, it was shown by Nochetto and Zhang in \cite[Proposition 5.4]{NoZh2} that
	$$
	| \partial \interp w(x_i)| \geq C h^d \det{(D^2w(x_i))},
	$$
	provided the mesh $\mathcal{T}_h$ is translation invariant, whence
	$$
\det{(D^2w(x_i))} \leq C  \frac{\delta^d}{h^d} 	T_\varepsilon[\interp w] (x_i).
	$$	
	We can now see that using this estimate introduces an extra factor $\frac{\delta}{h} \gg 1$, which could possibly explain the suboptimal rate proved in \Cref{T:RatesHolder} (rates of convergence for classical solutions).
	\end{remark}

	\begin{Lemma} [stability] \label{L:Stability}
		If $w_h\in\Vh$ is $w_h \geq 0$ on $\partial\Omega_h$, then
		$$
		\max_{x_i \in \Nhi} w_h(x_i)^- \leq C \delta \left( \sum_{x_i \in \cC_-(w_h)} 
		T_{\varepsilon}[w_h](x_i) \right)^{1/d} .
		$$
	\end{Lemma}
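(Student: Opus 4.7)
The plan is to chain together the discrete Alexandroff estimate (Proposition \ref{P:DAE}) with Corollary \ref{C:SubDifOper} applied to the convex envelope $\Gamma w_h$. First, since $w_h \geq 0$ on $\partial\Omega_h$, Proposition \ref{P:DAE} directly yields
\[
\max_{x_i \in \Nhi} w_h(x_i)^- \leq C \Big( \sum_{x_i \in \cC_{-}(w_h)} |\partial \Gamma w_h(x_i)| \Big)^{1/d},
\]
so the task reduces to bounding $|\partial \Gamma w_h(x_i)|$ node-by-node in terms of $T_\varepsilon[w_h](x_i)$.

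Next, for each contact node $x_i \in \cC_{-}(w_h)$, the function $\Gamma w_h$ is convex on $\overline{\Omega}_h$ with $\sdd \Gamma w_h(x_i;v_j) \geq 0$ for every $v_j \in \St$. Thus Corollary \ref{C:SubDifOper} (applied to $\Gamma w_h$ using the in-domain radius $\hat\delta \leq \delta$) gives
\[
|\partial \Gamma w_h(x_i)| \leq \delta^d \min_{\mathbf{v} \in \Vperpt} \prod_{j=1}^d \sdd \Gamma w_h(x_i;v_j).
\]
Remark \ref{R:2DifConvEnv} then tells us that $0 \leq \sdd \Gamma w_h(x_i;v_j) \leq \sdd w_h(x_i;v_j)$ at contact nodes, so the latter quantity is also nonnegative, which means $\sddp w_h(x_i;v_j) = \sdd w_h(x_i;v_j)$ and $\sddm w_h(x_i;v_j) = 0$ at $x_i$. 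Substituting into the definition \eqref{E:2ScOp} of $T_\varepsilon$ yields
\[
|\partial \Gamma w_h(x_i)| \leq \delta^d \min_{\mathbf{v} \in \Vperpt} \prod_{j=1}^d \sdd w_h(x_i;v_j) = \delta^d\, T_\varepsilon[w_h](x_i).
\]

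Inserting this pointwise bound back into the Alexandroff estimate gives exactly
\[
\max_{x_i \in \Nhi} w_h(x_i)^- \leq C \delta \Big( \sum_{x_i \in \cC_{-}(w_h)} T_\varepsilon[w_h](x_i) \Big)^{1/d},
\]
as desired. The main obstacle to watch is the boundary-layer subtlety: for $x_i \in \Nhi \setminus \Omega_{h,\delta}$ one has $\hat\delta < \delta$, so Corollary \ref{C:SubDifOper} actually produces the factor $\hat\delta^d$ rather than $\delta^d$; since $\hat\delta \leq \delta$ this is still controlled, but one must verify that the corollary (whose current statement is restricted to $\Omega_{h,\delta}$) extends to all interior nodes via Lemma \ref{L:SubDifBound} applied with the local $\hat\delta$. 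Aside from this verification, the argument is a direct composition of the previously established estimates.
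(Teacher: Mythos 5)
Your proof is correct and follows essentially the same route as the paper's: discrete Alexandroff estimate (Proposition \ref{P:DAE}), then Corollary \ref{C:SubDifOper} applied to $\Gamma w_h$ together with Remark \ref{R:2DifConvEnv}, and finally the pointwise simplification of $T_\varepsilon$ at a contact node (which the paper delegates to Lemma \ref{L:DisConv} while you derive inline). You are also right to flag the boundary-layer subtlety that Corollary \ref{C:SubDifOper} is formally stated only for $x_i\in\Nhi\cap\Omega_{h,\delta}$ while contact nodes may lie in $\omega_{h,\delta}$; your resolution via Lemma \ref{L:SubDifBound} with the local $\hat\delta\le\delta$ (recalling that $\sdd$ is itself defined using $\hat\delta$) is the correct repair, and it is a point the paper's proof passes over silently.
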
 
	\begin{proof}
		Since the function $w_h\ge0$ on $\partial\Omega_h$, 
		we invoke Proposition \ref{P:DAE} (discrete Alexandroff estimate) for $w_h$ to obtain
		\begin{equation*} \label{E:dAe}
		\max \limits_{ x_i \in \Nhi} w_h(x_i)^- \leq C \left( \sum\limits_{x_i \in \cC_-(w_h)} \left| \ds \Gamma w_h (x_i) \right| \right)^{1/d}
		\end{equation*}
		Applying \Cref{C:SubDifOper} (subdifferential vs discrete operator) to
		the convex function $\Gamma w_h (x_i)$ at a contact point
		$x_i \in \cC_-(w_h)$ and recalling \Cref{R:2DifConvEnv}, we have
		\begin{align*}
		\left| \ds \Gamma w_h (x_i) \right|  &\leq  \delta^d \min\limits_{\mathbf{v} \in \Vperpt} \prod_{j=1}^d  \sdd \Gamma w_h(x_i;v_j)
		\leq \delta^d \min\limits_{\mathbf{v} \in \Vperpt} \prod_{j=1}^d  \sdd  w_h(x_i;v_j)
		= \delta^d T_{\varepsilon}[w_h](x_i),
		\end{align*}
		where the last equality follows from \Cref{L:DisConv} (discrete convexity).
	\end{proof}
	
	%-----------------------------------------------------------------------------------
	\subsection{Concavity of the Discrete Operator}\label{S:concavity}
	%-----------------------------------------------------------------------------------
	%
	
		We recall concavity properties of $(\det A)^{1/d}$ for symmetric positive semi-definite
		matrices $A$ and extend them to $T_{\varepsilon}$. The results can be traced back to \cite{Krylov,Lions}, but we present them here for completeness.

	\begin{Lemma} [concavity of determinant] \label{L:Concavity} The following two statements are valid.
		\begin{enumerate}[$(i)$]
			\item For every symmetric positive semi-definite (SPSD) matrix $A$ we have that
			\[ (\det{A})^{1/d} = \frac{1}{d} \inf{\left\{ \tr(AB) \ \Big| \ B  \  { \rm is \ SPD \ and } \ \det{B}=1 \right\} } \]
			\item The function $A \mapsto (\det{A})^{1/d}$ is concave on SPSD matrices. 
		\end{enumerate}
	\end{Lemma}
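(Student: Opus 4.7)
The plan is to prove part $(i)$ first using the AM-GM inequality applied to the eigenvalues of a symmetrized product, and then to deduce part $(ii)$ as an immediate consequence by recognizing the right-hand side of $(i)$ as an infimum of linear functions of $A$.

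For part $(i)$, I would start by fixing an SPSD matrix $A$ and an SPD matrix $B$ with $\det B = 1$, and then rewrite $\tr(AB) = \tr(B^{1/2} A B^{1/2})$, where the latter matrix $M := B^{1/2} A B^{1/2}$ is SPSD with eigenvalues $\lambda_1,\ldots,\lambda_d \ge 0$. Since $\det M = \det A \det B = \det A$, I can apply the AM-GM inequality to obtain
\[
\frac{1}{d}\tr(AB) = \frac{1}{d}\sum_{j=1}^d \lambda_j \ge \Big(\prod_{j=1}^d \lambda_j\Big)^{1/d} = (\det M)^{1/d} = (\det A)^{1/d},
\]
which gives $(\det A)^{1/d} \le \frac{1}{d}\inf_B \tr(AB)$. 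To prove this infimum is attained, I will distinguish two cases. If $A$ is SPD, I set $B := (\det A)^{1/d} A^{-1}$, which is SPD with $\det B = 1$ by a direct computation, and gives $\tr(AB) = d(\det A)^{1/d}$. If $A$ is only SPSD with $\det A = 0$, both sides of the claimed identity should be zero; here I would diagonalize $A = Q\,\mathrm{diag}(0,\mu_2,\ldots,\mu_d)\,Q^T$ and test with the family $B_t := Q\,\mathrm{diag}(t^{1-d},t,\ldots,t)\,Q^T$, which is SPD with $\det B_t = 1$, and observe that $\tr(AB_t) = t\sum_{j\ge 2}\mu_j \to 0$ as $t \to 0^+$.

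Part $(ii)$ will be essentially free from $(i)$: the map $A \mapsto \tr(AB)$ is affine (in fact linear) in $A$ for each fixed $B$, and the pointwise infimum of a family of affine functions is concave, so $A \mapsto (\det A)^{1/d} = \frac{1}{d}\inf_B \tr(AB)$ is concave on the SPSD cone.

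The main obstacle I anticipate is the singular case of $(i)$: the choice $B = (\det A)^{1/d} A^{-1}$ degenerates when $\det A = 0$, so care is needed in constructing the approximating family $B_t$ and checking that $\det B_t = 1$ with $\tr(AB_t)\to 0$. Everything else is standard spectral theory plus AM-GM, with no subtle point once the singular case is handled; the remainder of the argument passes through cleanly because the trace and determinant of $AB$ are invariant under the $B^{1/2}$-conjugation used to pass to the symmetric SPSD matrix $M$.
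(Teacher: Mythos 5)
Your proposal is correct and follows essentially the same strategy as the paper's proof: pass to the symmetrized matrix $B^{1/2}AB^{1/2}$, apply the AM--GM inequality to its eigenvalues, attain equality via $B=(\det A)^{1/d}A^{-1}$ in the invertible case, handle the singular case with a one-parameter family of SPD matrices of unit determinant, and deduce concavity from the representation as an infimum of linear maps. The only cosmetic differences are that your singular-case family $B_t$ is a single uniform formula (rather than the paper's split into $k$ positive and $d-k$ zero eigenvalues with exponent $\beta=k/(d-k)$), and that you invoke the ``infimum of affine maps is concave'' principle directly for part (ii) rather than writing out the superadditivity of the infimum as the paper does.
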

	\begin{proof}
		
		We proceed in three steps.
		
		\medskip
		{\it Step 1: Proof of (i) for $A$ invertible.}
		Let $B$ be SPD with $\det B = 1$. Then $B^{1/2}$ is well defined,
		$\det(B^{1/2})=1$ and we obtain
		\[
		\det A = \det(B^{1/2} A B^{1/2}).
		\]
		Let $P$ be an orthogonal matrix that converts $B^{1/2} A B^{1/2}$ into
		a diagonal matrix $D$, namely $D = PB^{1/2} A B^{1/2}P^T$. Applying
		the geometric mean inequality yields
		\[
		\det(B^{1/2} A B^{1/2})^{1/d} = (\det D)^{1/d} \le \frac{1}{d} \tr D
		= \frac{1}{d} \tr(B^{1/2} A B^{1/2}) = \frac{1}{d} \tr(AB),
		\]
		where we have used the invariance of the trace under cyclic
		permutations of the factor to write the last two equalities. This
		shows that
		\[
		(\det{A})^{1/d} \leq \frac{1}{d} \inf{\left\{ \tr(AB) \ \Big| \  B \  { \rm is \ SPD \ and } \ \det{B}=1 \right\} }
		\]
		This inequality is actually equality provided $A$ is invertible. In
		fact, we can take $B = (\det A)^{1/d} A^{-1}$, which is SPD and $\det B = 1$.
		This proves (i) for $A$ nonsingular.
		
		\medskip
		{\it Step 2: Proof of (i) for $A$ singular}.
                Given the singular value decomposition of $A$
		\[
		A = \sum_{i=1}^d \lambda_i v_i \otimes v_i,
		\quad \lambda_1 \ge \cdots \lambda_k > \lambda_{k+1} = \cdots =
		\lambda_d = 0,
		\]
		with orthogonal vectors $(v_i)_{i=1}^d$, we can assume that $k>0$ for
		otherwise $A=0$ and the assertion is trivial. Given a parameter
		$\sigma>0$, let $B$ be defined by
		\[
		B := \sum_{i=1}^k \sigma v_i \otimes v_i + \sum_{i=k+1}^d
		\sigma^{-\beta}  v_i \otimes v_i
		\]
		and $\beta=k/(d-k)$ because then $\det B = \sigma^k\sigma^{-\beta(d-k)}=1$.
		Therefore,
		\[
		AB = \sigma \sum_{i=1}^k
		\lambda_i v_i \otimes v_i
		\quad\Rightarrow\quad
		\tr(AB) = \sigma \sum_{i=1}^k \lambda_i
		\to 0
		\quad \textrm{as } \sigma \to 0,
		\]
		which proves (i) for $A$ singular since $B$ is SPD.
		
		\medskip
		{\it Step 3: Proof of (ii)}.
		Let $A$ and $B$ be SPSD matrices and $0\le \lambda \le 1$.   
		Then $\lambda A + (1-\lambda) B$ is also SPSD and we can apply (i) to 
		\begin{align*}
		(\det{[ \lambda A + (1- \lambda) B ]})^{1/d} &= \frac{1}{d} \inf{ \left\{ \tr[( \lambda A + (1- \lambda) B)C] \Big| \ C\  \textrm{is SPD and} \det{C}=1 \right\} }
		\\ &\geq \frac{\lambda}{d} \inf{ \left\{ \tr(AC) \ \Big| \ C\  { \rm is \ SPD \ and} \  \det{C}=1 \right\} } 
		\\&+ \frac{1- \lambda}{d} \inf{ \left\{  \tr(BC) \ \Big| \ C\  { \rm is \ SPD \ and} \ \det{C}=1 \right\} }
		\\ &= \lambda (\det{A})^{1/d} + (1-\lambda) (\det{B})^{1/d}.
		\end{align*}
		This completes the proof.
	\end{proof}

	Upon relabeling $\widehat{A}=\lambda A$ and
	$\widehat{B}=(1-\lambda)B$, which are still SPSD, we can write Lemma
	\ref{L:Concavity} (ii) as follows:
	\begin{equation}\label{E:concavity}
	(\det \widehat{A})^{1/d} + (\det \widehat{B})^{1/d}
	\le \big(\det(\widehat{A}+\widehat{B})\big)^{1/d}.
	\end{equation}
	We now show that our discrete operator $T_\varepsilon[\cdot]$
	possesses a similar property.

	\begin{Corollary} [concavity of discrete operator] \label{C:OperIneq}
		Given two functions $u_h,w_h\in\Vh$, we have
		\[
		\big( T_{\epsilon} [u_h] (x_i) \big)^{1/d} +
		\big( T_{\epsilon} [w_h](x_i) \big)^{1/d}  \le
		\big( T_{\epsilon}[u_h+w_h](x_i) \big)^{1/d},
		\] 
		for all nodes $x_i \in \Nhi$ such that
		$\sdd u_h(x_i;v_j)\ge0, \ \sdd w_h(x_i;v_j)\geq 0$ for all $v_j \in \St$.
	\end{Corollary}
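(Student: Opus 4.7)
The plan is to reduce the statement to the matrix concavity inequality \eqref{E:concavity} by representing each of the three terms as the determinant of a diagonal SPSD matrix indexed by an orthonormal basis $\mathbf{v}\in\Vperpt$, and then handling the outer minimization by evaluating at a common argmin.

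First, since $\sdd u_h(x_i;v_j)\ge 0$ and $\sdd w_h(x_i;v_j)\ge 0$ for every $v_j\in\St$, the discrete operator $\sdd$ being linear in its first argument yields $\sdd(u_h+w_h)(x_i;v_j)=\sdd u_h(x_i;v_j)+\sdd w_h(x_i;v_j)\ge 0$ as well. Consequently \Cref{L:DisConv} applies to all three functions $u_h$, $w_h$, $u_h+w_h$ at the node $x_i$, and the representation \eqref{E:simpler-def} gives
\[
T_\varepsilon[u_h](x_i)=\min_{\mathbf v\in\Vperpt}\prod_{j=1}^d \sdd u_h(x_i;v_j),
\]
and analogously for $w_h$ and $u_h+w_h$, in particular without any positive-part artifact to worry about.

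Next, fix an arbitrary $\mathbf v\in\Vperpt$ and introduce the diagonal matrices $D_u(\mathbf v):=\mathrm{diag}\bigl(\sdd u_h(x_i;v_1),\ldots,\sdd u_h(x_i;v_d)\bigr)$ and $D_w(\mathbf v)$ defined analogously, both of which are SPSD by assumption. Their entries are the respective second differences, so $\det D_u(\mathbf v)=\prod_j \sdd u_h(x_i;v_j)$ and similarly for $w_h$, while $D_u(\mathbf v)+D_w(\mathbf v)=D_{u+w}(\mathbf v)$ by linearity. Applying inequality \eqref{E:concavity} (the reformulation of \Cref{L:Concavity}(ii)) to $D_u(\mathbf v)$ and $D_w(\mathbf v)$ therefore gives
\[
\Bigl(\prod_{j=1}^d \sdd u_h(x_i;v_j)\Bigr)^{1/d}+\Bigl(\prod_{j=1}^d \sdd w_h(x_i;v_j)\Bigr)^{1/d}\le \Bigl(\prod_{j=1}^d \sdd (u_h+w_h)(x_i;v_j)\Bigr)^{1/d}.
\]

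To finish, choose $\mathbf v^\ast\in\Vperpt$ to attain the minimum in the definition of $T_\varepsilon[u_h+w_h](x_i)$. Evaluating the previous inequality at $\mathbf v^\ast$ and using $T_\varepsilon[u_h](x_i)\le\prod_j \sdd u_h(x_i;v_j^\ast)$ and the analogous bound for $w_h$ (both valid because $\mathbf v^\ast$ is in general not the minimizer for $u_h$ or $w_h$), we conclude
\[
\bigl(T_\varepsilon[u_h](x_i)\bigr)^{1/d}+\bigl(T_\varepsilon[w_h](x_i)\bigr)^{1/d}\le \bigl(T_\varepsilon[u_h+w_h](x_i)\bigr)^{1/d}.
\]
There is no real obstacle here: the main subtlety is simply that the minima in the definition of $T_\varepsilon$ for the three functions are attained at possibly different bases, which is precisely resolved by evaluating the pointwise-in-$\mathbf v$ inequality at the argmin of the largest term and then using the minimum property on the other two.
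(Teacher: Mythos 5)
Your proof is correct and follows essentially the same route as the paper: both arguments (i) observe that linearity of $\sdd$ makes $u_h+w_h$ discretely convex at $x_i$, so \eqref{E:simpler-def} holds for all three functions; (ii) reduce the inequality at fixed $\mathbf v$ to \eqref{E:concavity} applied to diagonal SPSD matrices; and (iii) evaluate at the argmin $\mathbf v^\ast$ of $T_\varepsilon[u_h+w_h](x_i)$ and exploit the minimum property for $u_h$ and $w_h$.
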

	\begin{proof}
		We argue in two steps.
		
		\medskip
		\textit{Step 1.}
		For $a=(a_j)_{j=1}^d \in \mathbb{R}^d$ with $a_j \geq 0, \ j=1,\ldots,d$ we consider 
		the function
		\[
		f(a) := \left( \prod_{j=1}^d a_j \right)^{1/d},
		\]
		which can be conceived as the determinant of a
                diagonal (and thus symmetric)
		positive semi-definite matrix with diagonal elements $(a_j)_{j=1}^d$, i.e.
		\[
		f(a) = \big(\det { \rm diag
			}{\left\{a_1,\ldots,a_d\right\}}  \big)^{1/d}.
		\]
		Applying \eqref{E:concavity} to $\widehat{A}={ \rm diag
			}{\left\{a_1,\ldots,a_d\right\}} , \widehat{B}={ \rm diag
		}{\left\{b_1,\ldots,b_d\right\}} $
		with $a=(a_j)_{j=1}^d, b=(b_j)_{j=1}^d\ge0$ component wise, we deduce
		\[
		f(a) + f(b) \le f(a+b).
		\]

		{\it Step 2.}
		We now apply this formula to the discrete operator. Since both
		$u_h,w_h$ are discretely convex at $x_i\in\Nhi$, so is $u_h+w_h$, and we
		can apply Lemma \ref{L:DisConv} (discrete convexity) to write
		\[
		T_\varepsilon[u_h+w_h](x_i) = \prod_{j=1}^d \sdd[u_h+w_h](x_i;v_j)
		\]
		for a suitable $\mathbf{v}=(v_j)_{j=1}^d\in\Vperpt$. Making use again
		of \eqref{E:simpler-def}, this time for $u_h$ and $w_h$ and for the
		specific set of directions $\mathbf{v}$ just found, we obtain
		\begin{align*}
		\big( T_\varepsilon [u_h](x_i)  \big)^{\frac{1}{d}} &
		+ \big( T_\varepsilon [w_h](x_i)  \big)^{\frac{1}{d}}
		\le \left(\prod_{j=1}^d \sdd u_h(x_i;v_j)\right)^{\frac{1}{d}}
		+ \left(\prod_{j=1}^d \sdd w_h(x_i;v_j)\right)^{\frac{1}{d}}
		\\
		& \le  \left(\prod_{j=1}^d \sdd u_h(x_i;v_j) + \sdd w_h(x_i;v_j) \right)^{\frac{1}{d}}
		= \big(T_\varepsilon[u_h+w_h](x_i)\big)^{\frac{1}{d}},
		\end{align*}
		where the second inequality is given by Step 1 for
                  $a = ( \sdd u_h(x_i;v_j) )_{j=1}^d$ and
                  $b = ( \sdd w_h(x_i;v_j) )_{j=1}^d$.
		This is the asserted estimate.
	\end{proof}
	
	%----------------------------------------------------------------------------------
	\subsection{Continuous Dependence of the Two-Scale Method on Data}
	\label{S:cont-depend}	
	%----------------------------------------------------------------------------------
	
	We are now ready to prove the continuous dependence  of discrete solutions on data. This will be instrumental later for deriving rates of convergence for the two-scale method. 
	%This is a non-obvious result, since the stability is not enough to guarantee it for a non-linear method like ours. 
	
	\begin{Proposition}[continuous dependence on data] \label{P:ContDep}
		Given two functions $u_h,w_h\in\Vh$ such that $u_h \geq w_h$ on $\partial \Omega_h$ and
		\[
			T_{\varepsilon}[u_h](x_i) =f_1(x_i) \geq 0 \ \ {\rm and} \  \
			T_{\varepsilon}[w_h](x_i) = f_2(x_i)  \geq 0
		\]
		at all interior nodes $x_i\in\Nhi$, we have that
		\[
		\max \limits_{ \Omega_h} (u_h-w_h)^- \leq C \ \delta \ \left(
		\sum\limits_{x_i \in \cC_{-}(u_h-w_h)} \left(
		f_1(x_i)^{1/d}- f_2(x_i)^{1/d}  \right)^d   \right)^{1/d}. \]
	\end{Proposition}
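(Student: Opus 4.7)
The plan is to reduce the proposition to the stability lemma (\Cref{L:Stability}) applied to the difference $v_h := u_h - w_h$, then exploit the concavity of $T_\varepsilon^{1/d}$ (\Cref{C:OperIneq}) at contact points to bound $T_\varepsilon[v_h]$ in terms of $(f_1^{1/d}-f_2^{1/d})^d$.

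First I would set $v_h := u_h - w_h\in\Vh$ and observe that $v_h \geq 0$ on $\partial\Omega_h$ by hypothesis. This makes \Cref{L:Stability} directly applicable to $v_h$, yielding
\[
\max_{x_i\in\Nhi} v_h(x_i)^- \leq C\,\delta \left(\sum_{x_i\in\cC_{-}(v_h)} T_\varepsilon[v_h](x_i)\right)^{1/d}.
\]
The task then reduces to showing, at every contact node $x_i\in\cC_{-}(v_h)$, the pointwise inequality
\[
T_\varepsilon[v_h](x_i)\;\leq\;\bigl(f_1(x_i)^{1/d}-f_2(x_i)^{1/d}\bigr)^d.
\]

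To establish this, I would verify the hypotheses of \Cref{C:OperIneq} (concavity of the discrete operator) applied to the pair $(w_h,v_h)$. Since $T_\varepsilon[w_h]=f_2\geq 0$ everywhere, \Cref{L:DisConv} (discrete convexity) guarantees $\sdd w_h(x_i;v_j)\geq 0$ at every interior node and every $v_j\in\St$. At a contact node $x_i\in\cC_{-}(v_h)$, \Cref{R:2DifConvEnv} applied to $v_h$ gives $\sdd v_h(x_i;v_j)\geq \sdd\Gamma v_h(x_i;v_j)\geq 0$ for every $v_j\in\St$, since $\Gamma v_h$ is convex. Thus the two nonnegativity conditions required by \Cref{C:OperIneq} hold at $x_i$, and since $u_h = w_h + v_h$ we obtain
\[
T_\varepsilon[w_h](x_i)^{1/d} + T_\varepsilon[v_h](x_i)^{1/d}
\;\leq\; T_\varepsilon[u_h](x_i)^{1/d} = f_1(x_i)^{1/d}.
\]
Rearranging, using $T_\varepsilon[w_h](x_i)=f_2(x_i)$ and raising to the $d$-th power (noting the right side is automatically nonnegative since $T_\varepsilon[v_h](x_i)^{1/d}\ge 0$), yields the desired pointwise bound, which I would then insert into the stability estimate to conclude.

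The main obstacle is not any single calculation but rather ensuring the hypotheses of \Cref{C:OperIneq} are met precisely on the contact set of $v_h$ (not globally): the discrete convexity of $v_h$ need not hold throughout $\Omega_h$, but the argument only requires it at contact nodes, where it is supplied for free by the convex envelope inequality of \Cref{R:2DifConvEnv}. Once that subtlety is handled, the chain from stability to concavity to the final bound is direct.
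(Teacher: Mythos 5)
Your proof is correct and follows essentially the same route as the paper's: apply \Cref{L:Stability} to $v_h=u_h-w_h$, then at each contact node verify the hypotheses of \Cref{C:OperIneq} and invoke it for the pair $(w_h,v_h)$ to obtain $T_\varepsilon[v_h](x_i)^{1/d}\le f_1(x_i)^{1/d}-f_2(x_i)^{1/d}$. The only difference is cosmetic: you spell out, via \Cref{R:2DifConvEnv}, why $\sdd v_h(x_i;v_j)\ge0$ holds on the contact set, a step the paper states without explicit citation.
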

	\begin{proof}
		Since $u_h-w_h\in\Vh$ and $u_h -w_h \geq 0$ on $\partial \Omega_h$,
		\Cref{L:Stability} (stability) yields
		$$
		\max_{x_i\in\Nhi} (u_h - w_h) (x_i)^- \leq C \delta
		\left( \sum_{x_i \in \mathcal{C}_-(u_h - w_h)} 
		T_{\epsilon} [u_h - w_h] (x_i)  \right)^{1/d} .
		$$
		Since  $x_i \in \cC_-(u_h-w_h)$, we have that $\sdd (u_h-w_h)(x_i;v_j)\geq 0$, whence 
		$$
			\sdd u_h(x_i;v_j)\geq \sdd w_h(x_i;v_j) \ge 0
		\quad\forall v_j\in\St,
		$$
		where we have made use of Lemma \ref{L:DisConv} (discrete convexity).
		Invoking \Cref{C:OperIneq} (concavity of discrete operator)
		for $u_h-w_h$ and $w_h$, we deduce
		\[
		\big( T_{\epsilon} [u_h-w_h] (x_i) \big)^{1/d} \le
		\big( T_{\epsilon} [u_h](x_i) \big)^{1/d} -
		\big( T_{\epsilon} [w_h](x_i) \big)^{1/d},
		\]
		whence
		\begin{align*}
		\max_{x_i\in\Nhi} (u_h - w_h) (x_i)^- \leq &\;
		C \delta \left( \sum_{x_i \in \mathcal{C}_-(u_h - w_h)} 
		\left( T_{\epsilon} [u_h](x_i)^{1/d}  - T_{\epsilon} [w_h] (x_i)^{1/d} \right)^d
		\right)^{1/d} 
		\\
		= &\; C \delta \left( \sum_{x_i \in \mathcal{C}_-(u_h - w_h)} 
		\left(  f_1(x_i)^{1/d}  - f_2(x_i)^{1/d} \right)^d \right)^{1/d} .
		\end{align*}
		This completes the proof.
	\end{proof}

	%%%%%%%%%%%%%%%%%%%%%%%%%%%%%%%%%%%%%%%%%%%%%%%%%%%%%%%%%%%%%%%%%%%%%%%%%%%%%%%%%%%
	\section{Rates of Convergence} \label {S:RoC}
	%%%%%%%%%%%%%%%%%%%%%%%%%%%%%%%%%%%%%%%%%%%%%%%%%%%%%%%%%%%%%%%%%%%%%%%%%%%%%%%%%%%

	We now combine the preceding estimates to prove
        pointwise convergence rates for	solutions with varying degree of regularity. 
       We first present in \Cref{T:RatesHolder} the case of a classical solution with H\"older regularity. 
       This allows us to introduce the main techniques employed for deriving the rates of convergence. We then build on these techniques and prove error estimates for three more cases of increasing generality. In \Cref{T:RatesSobolev} we assume a classical solution with Sobolev regularity, which requires the use of embedding estimates and accumulating the truncation error in $l^d$, rather than $l^\infty$. We next deal with a non-classical solution that is globally in $W^{2}_{\infty}(\Omega)$ but its Hessian is discontinuous across a $d-1$ dimensional Lipschitz surface. To prove rates for this case we need to take advantage of the small volume affected by this discontinuity and combine it with the techniques used in \Cref{T:RatesHolder} and \Cref{T:RatesSobolev}. Lastly, we remove the non-degeneracy assumption $f \geq f_0 >0$ used in the previous three cases to obtain rates of convergence for a piecewise smooth viscosity solution with degenerate right hand side $f$. This corresponds to one of the numerical experiments performed in \cite{NoNtZh}.
%  with continuous Hessians, and either H\"older or Sobolev regularity,
%	and later for a special case of viscosity solutions with discontinuous Hessians;
%        these results require the nondegeneracy assumption $f\ge f_0>0$.
%        We also deal with the degenerate case $f\ge0$ and derive
%        error estimates of reduced order.
        Our estimates do not require $h$ small and are
          stated over the computational domain
          $\Omega_h\subset\Omega$.
	
	%----------------------------------------------------------------------------------
	\subsection{Barrier Function}\label{S:Barrier}
	%----------------------------------------------------------------------------------
	We recall here the two discrete barrier functions
        introduced in \cite[Lemmas 5.1, 5.2]{NoNtZh}.
	The first one is critical in order to control the behavior of
        $\uve$ close to the boundary of $\Omega_h$ and prove the
	convergence to the unique viscosity solution $u$ of \eqref{E:MA}. We now 
	use the same barrier function to control the pointwise error of $\uve$ and $u$
	close to the boundary. The second barrier allows us
        to treat the degenerate case
	$f \geq 0$, using techniques similar to the case $f >0$.
	
	\begin{Lemma}[discrete boundary barrier] \label{L:Barrier} Let $\Omega$ be uniformly convex and  $E>0$ be arbitrary. For each node $z \in \Nhi$ with ${\rm dist}(z,\partial \Omega_h) \leq \delta$, there exists a function $p_h\in\Vh$ such that $T_{\varepsilon}[p_h](x_i) \geq E$ for all $x_i \in \Nhi$, $p_h \leq 0$ on $\partial \Omega_h$ and
		\[
		|p_h(z)| \leq CE^{1/d} \delta 
		\]
		with $C$ depending on $\Omega$.
	\end{Lemma}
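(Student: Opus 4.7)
The strategy is to take $p_h$ to be the piecewise linear interpolant $\mathcal{I}_h p$ of a smooth convex quadratic $p$ on $\overline\Omega$. The key observation that unlocks the argument is that, because $p$ is convex, the interpolant $\mathcal{I}_h p$ lies pointwise above $p$ and agrees with it at nodes; consequently, for every interior node $x_i \in \Nhi$ and every direction $v_j \in \St$,
\[
\sdd p_h(x_i;v_j) \;\ge\; \sdd p(x_i;v_j) \;=\; v_j^T D^2 p\, v_j.
\]
This bypasses the consistency estimate of Lemma \ref{L:FullConsistency}, which is helpful because $\hat\delta$ can degrade to $O(h)$ for nodes close to $\partial\Omega_h$.

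To build $p$, I would pick $y_0 \in \partial\Omega$ with $|z - y_0| \le C\delta$, which is possible because $\Nhb \subset \partial\Omega$, $\mathrm{dist}(z, \partial\Omega_h) \le \delta$, and $\partial\Omega_h$ lies within Hausdorff distance $O(h^2)$ of $\partial\Omega$. Uniform convexity of $\Omega$ yields a constant $c = c(\Omega) > 0$ such that
\[
\nu(y_0) \cdot (y - y_0) \;\le\; -c\,|y - y_0|^2 \qquad\forall\, y \in \overline\Omega,
\]
where $\nu(y_0)$ is the outer unit normal. Setting $R := 1/(2c)$ and $x_0 := y_0 - R\,\nu(y_0)$, and expanding $|y - x_0|^2 = |y-y_0|^2 + 2R\,\nu(y_0)\cdot(y-y_0) + R^2$, one obtains $|y - x_0| \le R$ on $\overline\Omega$ with equality at $y_0$. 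The quadratic
\[
p(x) := A\bigl(|x - x_0|^2 - R^2\bigr), \qquad A := \tfrac{1}{2} E^{1/d},
\]
is then convex with $D^2 p = 2A\, I$, satisfies $\det D^2 p = E$, and is nonpositive on $\overline\Omega$.

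With $p_h := \mathcal{I}_h p$, the three required properties follow. First, $p_h \le 0$ on $\partial\Omega_h$ since boundary nodes lie on $\partial\Omega$ (where $p_h = p \le 0$) and linear interpolation on boundary faces preserves non-positivity. Second, the convexity observation above gives $\sdd p_h(x_i;v_j) \ge 2A$ at every interior node, so $p_h$ is discretely convex, and Lemma \ref{L:DisConv} yields
\[
T_{\varepsilon}[p_h](x_i) \;=\; \min_{\mathbf v \in \Vperpt} \prod_{j=1}^d \sdd p_h(x_i;v_j) \;\ge\; (2A)^d \;=\; E.
\]
Third, since $z$ is a node, $p_h(z) = p(z)$, and the triangle inequality $|z - x_0| \ge |y_0 - x_0| - |z - y_0| \ge R - C\delta$ bounds
\[
|p_h(z)| \;=\; A\bigl(R^2 - |z-x_0|^2\bigr) \;\le\; A\bigl(R + |z-x_0|\bigr)\bigl(R - |z-x_0|\bigr) \;\le\; C\,R\,E^{1/d}\,\delta.
\]

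The main obstacle is the geometric step: verifying that $|y - x_0| \le R$ holds globally on $\overline\Omega$, not merely locally near $y_0$. This is exactly where uniform convexity enters quantitatively, via the choice $R \ge 1/(2c)$ with $c$ the paraboloid constant. Everything else is a straightforward consequence of convexity and the interpolation property, and no estimate on $\hat\delta$ is needed for the operator lower bound.
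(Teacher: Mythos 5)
Your proof is correct, and it uses what is essentially the standard parabolic-barrier construction one would expect here: a quadratic $p(x)=A(|x-x_0|^2-R^2)$ with the center $x_0$ shifted outward from a nearby boundary point $y_0\in\partial\Omega$ by a distance $R$ chosen via the uniform-convexity constant so that $\overline\Omega\subset \overline{B(x_0,R)}$, with $A=\tfrac12 E^{1/d}$ calibrating $\det D^2 p=E$, and with the magnitude of $p$ at $z$ controlled by the factorization $R^2-|z-x_0|^2=(R+|z-x_0|)(R-|z-x_0|)\le 2R\cdot C\delta$. The one observation worth highlighting is your handling of the operator lower bound: since $p$ is convex, $\mathcal{I}_h p\ge p$ everywhere while agreeing at nodes, so $\sdd p_h(x_i;v_j)\ge \sdd p(x_i;v_j)=v_j^T D^2p\,v_j=2A$ exactly (the second difference of a quadratic is exact), giving $T_\varepsilon[p_h](x_i)\ge(2A)^d=E$ without invoking the consistency estimate of Lemma \ref{L:FullConsistency} — which would otherwise degrade near $\partial\Omega_h$ where $\hat\delta$ may shrink to $O(h)$. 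This is clean and exactly the right way to circumvent the boundary-layer issue; the only implicit hypothesis is $h\lesssim\delta$ in passing from ${\rm dist}(z,\partial\Omega_h)\le\delta$ to $|z-y_0|\le C\delta$ via a vertex of the nearest boundary face, which is consistent with the regime in which the two-scale method is set up.
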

	
	\begin{Lemma}[discrete interior barrier] \label{L:BarrierInterior}
		Let $\Omega$ be contained in the ball $B(x_0,R)$ of center $x_0$ and radius
		$R$. If $q(x):= \frac12\big( |x-x_0|^2 - R^2 \big)$, then its
		interpolant $q_h:=\interp q\in\Vh$ satisfies
		\[
		T_\ve[q_h](x_i) \ge 1\quad\forall x_i\in\Nhi,
		\qquad
		q_h(x_i) \le 0 \quad\forall x_i\in\Nhb.
		\]
	\end{Lemma}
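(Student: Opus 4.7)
The plan is to verify the two assertions separately, leveraging the fact that $q$ is a convex quadratic with constant Hessian $D^2 q = I$ (so $\det D^2 q = 1$) and that the nodal interpolant of a convex function dominates the function pointwise.

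For the boundary inequality, I would invoke the hypothesis $\Omega \subset B(x_0,R)$ together with the convention $\Nhb \subset \partial\Omega$ from Section~\ref{S:KeyProperties}. This yields $|x_i - x_0| \le R$ for every $x_i\in\Nhb$, hence $q(x_i) \le 0$, and since $q_h = \interp q$ is nodal, $q_h(x_i) = q(x_i) \le 0$.

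For the interior inequality, the key observation is that since $q$ is convex, its piecewise linear interpolant satisfies $q_h \ge q$ on $\overline{\Omega}_h$, with equality at every node. Combining these two facts, for any $x_i\in\Nhi$ and any $v_j\in\St$, I would estimate
\[
\sdd q_h(x_i;v_j) = \frac{q_h(x_i+\hat\delta v_j) - 2q_h(x_i) + q_h(x_i-\hat\delta v_j)}{\hat\delta^2}
\ge \frac{q(x_i+\hat\delta v_j) - 2q(x_i) + q(x_i-\hat\delta v_j)}{\hat\delta^2}.
\]
The right-hand side is a centered second difference of a pure quadratic, which equals $v_j^T D^2 q(x_i) v_j = v_j^T I v_j = 1$ \emph{exactly}, with no truncation error. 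Hence $\sdd q_h(x_i;v_j) \ge 1 > 0$ for every $v_j\in\St$.

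To finish, since all centered second differences of $q_h$ at $x_i$ are strictly positive, we have $\sddp q_h(x_i;v_j) = \sdd q_h(x_i;v_j) \ge 1$ and $\sddm q_h(x_i;v_j) = 0$. Therefore, for every $\mathbf{v} = (v_j)_{j=1}^d \in \Vperpt$,
\[
\prod_{j=1}^d \sddp q_h(x_i;v_j) - \sum_{j=1}^d \sddm q_h(x_i;v_j) = \prod_{j=1}^d \sdd q_h(x_i;v_j) \ge 1,
\]
and taking the minimum over $\Vperpt$ preserves this bound, yielding $T_\varepsilon[q_h](x_i) \ge 1$. There is no real obstacle here; the only subtle point worth noting is that the inequality $q_h \ge q$ is needed at the evaluation points $x_i \pm \hat\delta v_j$, which typically are \emph{not} nodes of $\Th$, so one cannot simply use $q_h(x_i) = q(x_i)$ at those locations.
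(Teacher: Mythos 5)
Your proof is correct, and it is the natural argument for this lemma; in fact the present paper does not reprove it but simply recalls it from the companion paper \cite{NoNtZh}, where the proof proceeds along the same lines. The two ingredients you identify are exactly the right ones: (i) convexity of $q$ gives $q_h=\interp q\ge q$ elementwise with equality at nodes, so the centered second differences of $q_h$ dominate those of $q$ even though the sample points $x_i\pm\hat\delta v_j$ are generally not nodes; and (ii) for the pure quadratic $q$ with $D^2q=I$ the centered second difference at step $\hat\delta$ in a unit direction equals $1$ exactly, so $\sdd q_h(x_i;v_j)\ge 1$ for every $v_j\in\St$. From this $\sddp q_h=\sdd q_h$ and $\sddm q_h=0$, so $T_\varepsilon[q_h](x_i)\ge 1$, and the boundary inequality is immediate from $\Nhb\subset\partial\Omega\subset\overline{B(x_0,R)}$ together with nodal interpolation. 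No gaps.
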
	
	
	%----------------------------------------------------------------------------------
	\subsection{Error Estimates for Solutions with H\"older Regularity}\label{S:RatesHolder}
	%----------------------------------------------------------------------------------
	
	We now deal with classical solutions $u$ of (\ref{E:MA}) of
        class $C^{2+k,\alpha}(\overline{\Omega})$, with $k=0,1$ and
        $0<\alpha \leq 1$, and derive pointwise error estimates.
	We proceed as follows. We first use \Cref{L:Barrier}
          (discrete boundary barrier) to control $\uve -\interp u$ in
          the $\delta$-neighborhood $\omega_{h,\delta}$ 
          of $\partial \Omega_h$, where the consistency error of
          $T_\varepsilon[\interp u]$ is of order one according to
          \Cref{L:FullConsistency} (consistency of $T_\varepsilon[\interp u]$).
          In the $\delta$-interior region $\Omega_{h,\delta}$ we combine
          the interior consistency error of
          $T_\varepsilon[\interp u]$ from \Cref{L:FullConsistency} and \Cref{P:ContDep}
          (continuous dependence on data). Judicious choices of
          $\delta$ and $\theta$ in terms of $h$ conclude the argument.
	
	\begin{Theorem}[rates of convergence for classical solutions]\label{T:RatesHolder}
		Let $f(x) \geq f_0 >0$ for all $x \in \Omega$. Let $u$ be the
		classical solution of (\ref{E:MA}) and $\uve$ be the discrete
		solution of (\ref{E:2ScOp}). If $u \in \Ctao$ for $0<\alpha
		\leq 1$ and
                \[
		\delta = R_0(u) \ h^{\frac{2}{2+\alpha}},
		\quad
		\theta = R_0(u)^{-1} \ h^{\frac{\alpha}{2+\alpha}}
		\]
		with $R_0(u) = |u|_{\Wtio}^{\frac{1}{2+\alpha}} \ |u|_{\Ctao}^{-{\frac{1}{2+\alpha}}}$, then
		\[
		\|u-\uve\|_{L^{\infty}(\Omega_h)} \leq C(\Omega,d,f_0)
                \Big( |u|_{\Ctao}^{\frac{1}{2+\alpha}}
                \ |u|_{\Wtio}^{d-\frac{1}{2+\alpha}} + \big(1+ R_0(u)\big) \
                |u|_{\Wtio} \Big) \ h^{\frac{\alpha}{2+\alpha}}.
		\]
		Otherwise, if $u \in \Cthao$ for $0 < \alpha \leq 1$ and
		\[
		\delta = R_1(u) \ h^{\frac{2}{3+\alpha}},
		\quad
		\theta = R_1(u)^{-1} \ h^{\frac{1+\alpha}{3+\alpha}}
		\]
		with $R_1(u) := |u|_{\Wti(\Omega)}^\frac{1}{3+\alpha}|u|_{\Cthao}^{-\frac{1}{3+\alpha}}$, then
		\[
		\|u-\uve\|_{L^{\infty}(\Omega_h)} \leq  C(\Omega,d,f_0)
                \Big( |u|_{\Cthao}^{\frac{1}{3+\alpha}}
                \ |u|_{\Wtio}^{d-\frac{1}{3+\alpha}} +
                \big(1 + R_1(u) \big) \
                |u|_{\Wtio} \Big) \ h^{\frac{1+\alpha}{3+\alpha}}.		
                \]
	\end{Theorem}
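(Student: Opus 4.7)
The plan is to bound $\|u_\varepsilon-\mathcal{I}_h u\|_{L^\infty(\Omega_h)}$ by splitting the analysis between the boundary strip $\omega_{h,\delta}$ and the interior $\Omega_{h,\delta}$, and then absorbing the standard interpolation error $\|\mathcal{I}_h u-u\|_{L^\infty(\Omega_h)}\lesssim h^{2+k+\alpha}|u|_{C^{2+k,\alpha}(\overline{\Omega})}$ via the triangle inequality. The split is forced by \Cref{L:FullConsistency}: in $\omega_{h,\delta}$ the only available bound on $|T_\varepsilon[\mathcal{I}_h u]-f|$ has size $\sim|u|_{W^2_\infty}^d$, whereas in $\Omega_{h,\delta}$ it sharpens to $C_1\delta^{k+\alpha}+C_2(h^2/\delta^2+\theta^2)$ with $C_1,C_2$ as in \eqref{E:C1-C2}.

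For the boundary strip, given any $z\in\Nhi$ with $\mathrm{dist}(z,\partial\Omega_h)\le\delta$, I apply \Cref{L:Barrier} with $E$ of order $|u|_{W^2_\infty}^d$ to obtain a barrier $p_h^z\in\Vh$ satisfying $T_\varepsilon[p_h^z]\ge E$, $p_h^z\le 0$ on $\partial\Omega_h$, and $|p_h^z(z)|\lesssim|u|_{W^2_\infty}\delta$. Using the concavity bound \Cref{C:OperIneq} to verify that $\mathcal{I}_h u\pm\mu p_h^z$ is a discrete super/sub-solution at every interior node for a suitable $\mu$ of order one, the discrete comparison principle \Cref{L:DCP} yields $|u_\varepsilon(z)-\mathcal{I}_h u(z)|\lesssim|u|_{W^2_\infty}\delta$ at every $z\in\omega_{h,\delta}$.

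For the interior region I invoke \Cref{P:ContDep} on the pair $(u_\varepsilon,\mathcal{I}_h u)$, first regularizing $\mathcal{I}_h u$ by adding a small multiple of $q_h$ from \Cref{L:BarrierInterior} to guarantee $T_\varepsilon[\mathcal{I}_h u]\ge 0$ at every interior node. Combining the cardinality bound $\#\Nhi\lesssim h^{-d}$ with the Lipschitz inequality $|a^{1/d}-b^{1/d}|\lesssim f_0^{(1-d)/d}|a-b|$, which is valid thanks to the nondegeneracy $f\ge f_0>0$, the proposition produces
\[
\max_{\Omega_h}(u_\varepsilon-\mathcal{I}_h u)^-\;\lesssim\;\frac{\delta}{h}\Bigl(C_1\delta^{k+\alpha}+C_2\bigl(h^2/\delta^2+\theta^2\bigr)\Bigr)+C|u|_{W^2_\infty}\delta,
\]
and the symmetric argument (swapping the roles of the two functions) controls $(u_\varepsilon-\mathcal{I}_h u)^+$. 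The unwanted factor $\delta/h$ is precisely the one foreshadowed in \Cref{R:VolumeSubopt}.

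Finally, equilibrating the three interior consistency terms $\delta^{k+\alpha}\sim h^2/\delta^2\sim\theta^2$ yields the prescribed $\delta\sim h^{2/(k+2+\alpha)}$ and $\theta\sim h^{(k+\alpha)/(k+2+\alpha)}$; the interior contribution then scales like $h^{(k+\alpha)/(k+2+\alpha)}$ and the boundary contribution $\delta$ is no larger, reproducing the rates $h^{\alpha/(2+\alpha)}$ and $h^{(1+\alpha)/(3+\alpha)}$ for $k=0,1$ respectively. Tracking the $|u|_{W^2_\infty}$ and $|u|_{C^{2+k,\alpha}(\overline{\Omega})}$ dependences through $C_1,C_2$ and the definition of $R_k(u)$ produces the stated prefactor. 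The main technical obstacle is the boundary construction: one must choose $\mu$ in $\mathcal{I}_h u\pm\mu p_h^z$ so that the modified function is a discrete super/sub-solution at every interior node, not just inside $\omega_{h,\delta}$, while leaving its boundary trace unchanged. This is exactly what the concavity identity of \Cref{C:OperIneq} makes possible.
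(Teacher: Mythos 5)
Your overall plan (barrier on the $\delta$-strip, continuous dependence plus consistency and cardinality in the interior, then equilibrate $\delta\sim h^{2/(2+k+\alpha)}$, $\theta\sim h/\delta$) matches the paper's. The one place where you depart from it is where the gap lies: the boundary barrier step.

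You propose to perturb $\mathcal{I}_h u$ by $\pm\mu p_h^z$ and to certify the super/sub-solution property with \Cref{C:OperIneq}. But that corollary is a statement about \emph{discretely convex} summands: it requires $\sdd u_h(x_i;v_j)\ge 0$ and $\sdd w_h(x_i;v_j)\ge0$ for all $v_j\in\St$ at the node in question. Neither $\mathcal{I}_h u$ nor $\mathcal{I}_h u-\mu p_h^z$ is discretely convex in general. Indeed, near $\partial\Omega_h$ the consistency bound in \Cref{L:FullConsistency} is only of size $|u|_{W^2_\infty}^d$, so $T_\varepsilon[\mathcal{I}_h u](x_i)$ may well be negative there, and \Cref{L:DisConv} then tells you nothing about the sign of $\sdd \mathcal{I}_h u(x_i;v_j)$. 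Without that sign information the superadditivity $\prod(a_j+b_j)\ge\prod a_j+\prod b_j$ underlying \Cref{C:OperIneq} simply does not apply. Your fix of adding $\sigma q_h$ from \Cref{L:BarrierInterior} could in principle restore discrete convexity, but you invoke it only for the interior step, you never quantify $\sigma$, and the extra $O(\sigma^d)$ contribution to $T_\varepsilon$ would then have to be carried through the consistency budget and the final equilibration — something you do not do, and which would change the bookkeeping.

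The paper avoids all of this by putting the barrier on the discrete solution rather than on the interpolant: it compares $\uve+p_h$ with $\mathcal{I}_h u$. Since $T_\varepsilon[\uve]=f\ge0$ and $T_\varepsilon[p_h]\ge E\ge0$, \Cref{L:DisConv} guarantees both $\uve$ and $p_h$ are discretely convex, the superadditivity is licit, and $T_\varepsilon[\uve+p_h]\ge f+E\ge T_\varepsilon[\mathcal{I}_h u]$ follows immediately from consistency; \Cref{L:DCP} then gives $\uve(z)-\mathcal{I}_h u(z)\lesssim|u|_{W^2_\infty}\delta$ with no extra hypotheses. You should adopt that orientation of the comparison; the rest of your argument — including the interior estimate, the use of $f\ge f_0$ to Lipschitz-bound $t\mapsto t^{1/d}$, the $\#\Nhi\lesssim h^{-d}$ cardinality count, and the choice of $\delta,\theta$ — is consistent with the paper's Steps~2 and~3.
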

	
\begin{proof}
If $R_k(u) :=
|u|_{\Wti(\Omega)}^\frac{1}{2+k+\alpha}|u|_{\Cka(\overline{\Omega})}^{-\frac{1}{2+k+\alpha}}$,
$k=0,1$, we prove below the estimate
\[
\max_{\Omega_h} \, (\uve - \interp u) \lesssim
\Big( \big(1+R_k(u)|u|_{\Wtio}\big)+ |u|_{\Cka(\overline{\Omega})}^{\frac{1}{2+k+\alpha}}
\ |u|_{\Wtio}^{d-\frac{1}{2+k+\alpha}}
\Big) \ h^{\frac{k+\alpha}{2+k+\alpha}}
\]
with a hidden constant depending on $\Omega, d, f_0$.
We proceed in three steps.
The estimates for $\max_{\Omega_h} \, (\interp u - \uve)$ are similar         
and thus omitted. Adding the interpolation error $\|u-\interp
u\|_{L^{\infty}(\Omega_h)} \leq Ch^2 |u|_{\Wtio}$
\cite{BrenScott} readily gives the asserted estimates because
$\frac{k+\alpha}{2+k+\alpha} \le \frac{1}{2}$ for $k=0,1$ and
$0<\alpha\le 1$.

		\medskip        
		\textit{Step 1: Boundary estimate.} We show
		that for $z \in \Nhi$ so that
                ${\rm dist}(z,\partial \Omega_h) \leq \delta$
		$$
		\uve(z) - \interp u(z) \leq C |u|_{\Wtio} \delta.
		$$
		Given the function $p_h$ of \Cref{L:Barrier}
		(discrete boundary barrier), for $z$ fixed, we examine the behavior of $\uve + p_h$. For any interior node $x_i \in \Nhi$, we have
		$$
		\begin{aligned}
		\prod_{j=1}^d \sdd (\uve +p_h) (x_i;v_j) &= \prod_{j=1}^d (\sdd \uve (x_i;v_j)  + \sdd p_h(x_i;v_j)  )\\
		&\geq \prod_{j=1}^d \sdd \uve (x_i;v_j) + \prod_{j=1}^d \sdd  p_h (x_i;v_j) \quad \forall \mathbf{v}=(v_j)_{j=1}^d \in \Vperpt,
		\end{aligned}
		$$
		because $\sdd \uve (x_i;v_j) \geq 0$ and $\sdd p_h(x_i;v_j)\geq 0$. We apply \Cref{L:FullConsistency} (consistency of $T_{\varepsilon}[\interp u])$ to obtain
		$$
		\begin{aligned}
		T_{\varepsilon} [\uve + p_h] (x_i) & \geq T_{\varepsilon}[\uve](x_i) + T_{\varepsilon}[p_h](x_i) \\
		& \geq f(x_i) + E \\
		& \geq T_{\varepsilon}[\interp u](x_i) - C|u|_{\Wtio}^d +  E \geq T_{\varepsilon} [\interp u](x_i),
		\end{aligned}
		$$
		provided $E \geq C|u|_{\Wtio}^d$. Since $\interp u = \uve$ and
		$p_h \leq 0 $ on $\partial \Omega_h$, we deduce from
		\Cref{L:DCP} (discrete comparison principle) that
		$$
		\uve(z) + p_h(z) \leq \interp u (z),
		$$
		whence,
		$$
		\uve(z) - \interp u(z) \leq C |u|_{\Wtio} \delta.
		$$
	
		\medskip
		\textit{Step 2: Interior estimate}. We show that for
                all $x_i \in \Nhi$ so that
                ${\rm dist}(x_i,\partial \Omega_h) \geq \delta$
		$$
		T_{\varepsilon} [\uve] (x_i)- T_{\varepsilon} [\interp u](x_i) \leq C_1(u) \delta^{\alpha +k} + C_2(u) \left( \frac{h^2}{\delta^2}+\theta^2 \right)
		$$
		with $k=0,1$ and 
		$$
		C_1(u) = C |u|_{\Cka(\overline{\Omega})} \ |u|_{\Wtio}^{d-1}, \quad C_2(u) = C |u|_{\Wtio}^d
		$$
        dictated by \Cref{L:FullConsistency}.
		Step 1 guarantees that 
		$$
		\uve - \interp u \leq C|u|_{\Wtio} \delta \quad \text{on }  \partial \Omega_{h,\delta},
		$$
		where $\Omega_{h,\delta}$ is defined in \eqref{Omega-delta}.
		Let $\dep:= \interp u-\uve +C|u|_{\Wtio}\delta$ and note that $\dep \geq 0 $ on $\partial \Omega_{h,\delta}$. We then apply \Cref{P:ContDep} (continuous dependence on data) to $\dep$ in $\Omega_{h,\delta}$, in conjunction with \Cref{L:FullConsistency} (consistency of $T_{\varepsilon}[\interp u]$), to obtain 
		$$
		\max_{\Omega_{h,\delta}} \dep^- \lesssim
                \ \delta \left(  \sum_{x_i \in \cC_-(\dep)} \left( (f(x_i) +e)^{1/d} - f(x_i)^{1/d}  \right)^d  \right)^{1/d}
		$$
		with $e:= C_1(u) \delta^{\alpha+k} + C_2(u) \left(
                \frac{h^2}{\delta^2} + \theta^2 \right)$. We now use
                that the function $t \mapsto t^{1/d}$ is concave with derivative
                $\frac{1}{d}t^{1/d-1}$ and $f(x_i)\geq f_0 >0$ to estimate
		$$
		(f(x_i)+e)^{1/d}-f(x_i)^{1/d} \leq \frac{e}{d f_0^{\frac{d-1}{d}}},
		$$	
		whence 
		$$
		\max_{\Omega_{h,\delta}} \dep^- \lesssim \delta \left(  \sum_{x_i \in \cC_-(\dep)} \left(C_1(u) \delta^{\alpha +k}+C_2(u)\left(\frac{h^2}{\delta^2}+\theta^2\right) \right)^d \right)^{1/d}.
		$$
		Since the cardinality of $\cC_-(\dep)$ is bounded by that of $\mathcal N_h$, which in turn is bounded by $Ch^{-d}$ with $C$ depending on shape regularity, we end up with 
		\begin{equation} \label{E:rates-factor}
		\max_{\Omega_h} \, (\uve -\interp u) \lesssim
                |u|_{\Wtio} \delta  + \frac{\delta}{h} \left( C_1(u)
                \delta^{\alpha+k} + C_2(u) \Big( \frac{h^2}{\delta^2}
                + \theta^2  \Big) \right).
		\end{equation}
		
		\medskip
		\textit{Step 3: Choice of $\delta$ and
                    $\theta$.} To find an optimal choice of $\delta$
                and $\theta$ in terms of $h$, we minimize the
                right-hand side of the preceding estimate. We first
                set $\theta^2 = \frac{h^2}{\delta^2}$ and equate
                the last two terms
		$$
		C_1(u) \delta^{k+\alpha} = C_2(u) \frac{h^2}{\delta^2} \quad \Longrightarrow \quad \delta = R_k(u) h^{\frac{2}{2+k+\alpha}}.
		$$
		Writing again $C_1(u)$ and
                $C_2(u)$ in terms of $|u|_{\Cka(\overline{\Omega})}$
                and $|u|_{\Wtio}$, we thus obtain
		$$
		\max_{\Omega_h} \, (\uve - \interp u) \lesssim
                R_k(u) |u|_{\Wtio} \ h^{\frac{2}{2+k+\alpha}}
                + |u|_{\Cka(\overline{\Omega})}^{\frac{1}{2+k+\alpha}}
                \ |u|_{\Wtio}^{d-\frac{1}{2+k+\alpha}}
                \ h^{\frac{k+\alpha}{2+k+\alpha}}.
		$$
		Finally, the desired estimate follows immediately because
                $k+\alpha \leq 2$.
	\end{proof}
		We observe that according to \Cref{T:RatesHolder} the rate of convergence is of order $h^{1/2}$ whenever $u\in C^{3,1}(\overline{\Omega})$. However, our numerical experiments in \cite{NoNtZh} indicate linear rates of convergence, which correspond to \Cref{L:FullConsistency} (consistency of $T_\varepsilon[\interp u_h]$). This mismatch may be attributed to the factor $\frac{\delta}{h} \gg 1$ in \eqref{E:rates-factor}, which relates to \Cref{R:VolumeSubopt} (artificial factor $\frac{\delta}{h}$). This issue will be tackled in a forthcoming paper. 

	%-----------------------------------------------------------------------------------
	\subsection{Error Estimates for Solutions with Sobolev Regularity}\label{S:RatesSobolev}
	%-----------------------------------------------------------------------------------
	
	We now derive error estimates for solutions $u \in W^s_p(\Omega)$ with $s>2+\frac{d}{p}$ so that $W^s_p(\Omega)\subset C^2(\overline{\Omega})$.
	We exploit the structure of the estimate of \Cref{P:ContDep} (continuous dependence on data) which shows that its right-hand side accumulates in $l^d$ rather than $l^{\infty}$.
	\begin{Theorem} [convergence rate for $W_p^s$
		solutions] \label{T:RatesSobolev} Let $f \geq f_0 >0$ in $\Omega$
		and let the viscosity solution $u$ of (\ref{E:MA}) be of class
		$W_p^s(\Omega)$ with $\frac{d}{p} < s-2-k\le 1,
		\ k=0,1$. If $\uve$ is the discrete solution of
                (\ref{E:2ScOp}) and
		\[
		\delta = R(u) \ h^{\frac{2}{s}},
		\quad
		\theta = R(u)^{-1} \ h^{1-\frac{2}{s}},
		\]
                with $R(u) := |u|_{\Wtio}^{\frac{1}{s}} |u|_{\Wspo}^{-\frac{1}{s}}$,
		then
		$$
                \| u - \uve\|_{L^{\infty}(\Omega_h)} \leq
                C(d,\Omega,f_0)  \Big( |u|_{\Wspo}^{\frac{1}{s}}
                \ |u|_{\Wtio}^{d-\frac{1}{s}} + \big(1 + R(u) \big)
                |u|_{W^2_\infty(\Omega)} \Big) h^{1-\frac{2}{s}} .
		$$
	\end{Theorem}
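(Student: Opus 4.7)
My plan is to follow the three-step structure of \Cref{T:RatesHolder}: boundary control via the discrete barrier, interior control via continuous dependence and consistency, and optimization in $\delta$ and $\theta$. The boundary step is identical to Step 1 of \Cref{T:RatesHolder}; the new ingredient, and the source of the improved rate, is that in the interior I will exploit the $l^d$ accumulation on the right-hand side of \Cref{P:ContDep} rather than estimate the truncation error uniformly and multiply by $|\Nhi|^{1/d}$. Since the hypothesis $s > 2 + d/p$ already gives $u \in C^2(\overline{\Omega})$ and $|u|_{\Wtio} < \infty$, \Cref{L:Barrier} combined with \Cref{L:DCP} yields the boundary estimate $\uve - \interp u \le C |u|_{\Wtio} \delta$ on $\omega_{h,\delta}$.

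For the interior step I would set $\dep := \interp u - \uve + C |u|_{\Wtio} \delta$, which is nonnegative on $\partial\Omega_{h,\delta}$, and apply \Cref{P:ContDep} on $\Omega_{h,\delta}$. The concavity trick $(f+e)^{1/d} - f^{1/d} \le e/(d f_0^{(d-1)/d})$ from \Cref{T:RatesHolder} reduces the analysis to controlling $\delta \big(\sum_{x_i \in \cC_-(\dep)} e_i^d\big)^{1/d}$, where $e_i := T_{\varepsilon}[\interp u](x_i) - f(x_i)$. Applying \Cref{L:FullConsistency} with the critical H\"older exponent $\alpha := s-2-k-d/p \in (0,1]$ (admissible by the hypothesis $d/p < s-2-k \le 1$) yields
\[
e_i \le C |u|_{\Cka(B_i)} |u|_{\Wti(B_i)}^{d-1} \delta^{k+\alpha} + C |u|_{\Wtio}^d \bigl( h^2/\delta^2 + \theta^2 \bigr),
\]
which I would sum in $l^d$ treating the two summands separately.

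The main technical step is the $l^d$ estimate of the local H\"older seminorms $|u|_{\Cka(B_i)}$. A scaling argument shows that at the critical exponent the embedding $W_p^s(B) \hookrightarrow C^{2+k,\alpha}(B)$ has a constant independent of the radius of $B$, so $|u|_{\Cka(B_i)} \le C |u|_{W_p^s(B_i)}$ uniformly in $i$. Combining this bound with H\"older's inequality applied to the $i$-sum with conjugate exponents $p/d$ and $p/(p-d)$ (legitimate because $p > d$), the $O((\delta/h)^d)$ overlap of the enlarged balls $\{B_i\}$, and $|\Nhi| \le C h^{-d}$, I expect
\[
\Big( \sum_i |u|_{\Cka(B_i)}^d \Big)^{1/d} \le C \, \delta^{d/p} \, |u|_{\Wspo} \, h^{-1}.
\]

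Inserting this into \Cref{P:ContDep}, together with the trivial $|\Nhi|^{1/d} \sim h^{-1}$ bound for the uniform second summand in $e_i$, produces two competing interior contributions of sizes $\delta^{s-1} h^{-1} |u|_{\Wtio}^{d-1} |u|_{\Wspo}$ and $(h/\delta) |u|_{\Wtio}^d$. Choosing $\theta = h/\delta$ equalizes the two parenthesized terms, and setting $\delta = R(u) h^{2/s}$ equalizes the two interior contributions, producing the rate $h^{1-2/s}$ with the coefficient in the statement. The boundary and interpolation errors $|u|_{\Wtio}\delta$ and $h^2 |u|_{\Wtio}$ are of orders $h^{2/s}$ and $h^2$, both smaller than $h^{1-2/s}$ for $s \le 4$ and $h < 1$, and so are absorbed into the $(1+R(u))|u|_{\Wtio}$ term of the statement. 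The main obstacle I foresee is the middle display: rigorously verifying the $\delta$-independence of the critical Sobolev embedding constant on small balls (which requires care with fractional regularity when $\alpha \in (0,1)$) and accurately tracking the overlap and node-count factors so that the final exponents match exactly.
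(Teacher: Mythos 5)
Your proposal mirrors the paper's proof in all essentials: the same barrier argument at the boundary, the same use of \Cref{P:ContDep} with the concavity bound $(f+e)^{1/d}-f^{1/d}\le e/(df_0^{(d-1)/d})$, the same critical Sobolev embedding $W_p^s(B_i)\hookrightarrow C^{2+k,\alpha}(B_i)$ with $\alpha=s-2-k-d/p$ and radius-independent constant, the same H\"older inequality with exponent $p/d$ combined with the $(\delta/h)^d$ overlap count, and the same optimization $\theta=h/\delta$, $\delta=R(u)h^{2/s}$. The concern you flag about the $\delta$-independence of the embedding constant is addressed in the paper by noting that the Sobolev numbers of $W^{s-2-k}_p$ and $C^{0,\alpha}$ coincide (citing Giusti, Theorem 2.9).
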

	\begin{proof}
	  We proceed as in \Cref{T:RatesHolder}
          to show an upper bound for $\uve - \interp u$.
          The boundary estimate of Step 1 remains intact, namely
		$$
		\uve(z) - \interp u(z) \leq C \ |u|_{\Wtio} \ \delta
		$$	
		for all $z \in \Nhi$ such that
                ${\rm dist}(z,\partial \Omega_h) \leq \delta$.
                On the other hand, Step 2 yields
		$$
		\max_{\Omega_{h,\delta}}(\uve - \interp u)
                \lesssim  \delta |u|_{\Wtio} + \delta \left(
                \sum_{x_i \in \Nhi} C_1(u)^d  \delta^{(k+\alpha)d} +
                C_2(u)^d \left( \frac{h^2}{\delta^2} + \theta^2
                \right)^d  \right)^{1/d},
		$$
		where $C_1(u)$ and $C_2(u)$ are defined in \Cref{L:FullConsistency}
		(consistency of $T_\varepsilon [\interp u]$) and $0 <
                \alpha = s-2-k-\frac{d}{p} \leq 1$ corresponds to the
                Sobolev embedding $W_p^s(B_i) \subset \Cka(B_i)$.
                In the following calculations we resort to the
                Sobolev inequality \cite[Theorem 2.9]{Giusti}
                $$  
                  |u|_{\Cka(B_i)} \le C |u|_{W_p^s(B_i)},
                $$
                involving only semi-norms. We stress that $C>0$
                depends on the Lipschitz constant of $B_i$ but not on
                its size. The latter is due to the fact that the
                Sobolev numbers of $W^{s-2-k}_p(B_i)$ and
                $C^{0,\alpha}(B_i)$ coincide:
                $0< s-k-2-d/p=\alpha \le 1$. We refer to \cite[Theorem
                2.9]{Giusti} for a proof for $0<s<1$.
                We now use the H\"older
                inequality with exponent $\frac{p}{d} > 1$ to obtain
		$$
		\begin{aligned}
		\left( \sum_{x_i \in \Nhi} C_1(u)^d \right)^{\frac{1}{d}} &\lesssim \left(  \sum_{x_i \in \Nhi}  |u|_{W_p^s(B_i)}^d |u|_{\Wti(B_i)}^{d(d-1)} \right)^{\frac{1}{d}} \\
		&\lesssim \left( \sum_{x_i \in \Nhi} |u|_{W_p^s(B_i)}^{d\frac{p}{d}}  \right)^{\frac{1}{d}\frac{d}{p}} \ \left( \sum_{x_i \in \Nhi} |u|_{\Wti(B_i)}^{d(d-1)\frac{p}{p-d}}  \right)^{\frac{1}{d}\frac{p-d}{p}}.
		\end{aligned}
		$$
		Since the cardinality of the set of balls $B_i$ containing an
		arbitrarily given $x \in \Omega$ is proportional to $\left(
		\frac{\delta}{h} \right)^d$, while the cardinality of $\Nhi$ is
		proportional to $h^{-d}$,
		we get
		\begin{align*}
		\left( \sum_{x_i \in \Nhi} C_1(u)^d \right)^{\frac{1}{d}} 
		& \lesssim \left( \frac{\delta}{h} \right)^{\frac{d}{p}} |u|_{\Wspo}
		\ \left( h^{-d} |u|_{\Wtio}^{\frac{d(d-1)p}{p-d}}
		\right)^{\frac{p-d}{pd}} \\
		& \lesssim \frac{\delta^{\frac{d}{p}}}{h} \ |u|_{\Wspo} \ |u|_{\Wtio}^{d-1}.
		\end{align*}
		Exploiting that $\alpha + k + \frac{d}{p}+1=s-1$,
                we readily arrive at
		$$
		\delta \left( \sum_{x_i \in \Nhi} C_1(u)^d \ \delta^{(k+\alpha)d}
                \right)^{\frac{1}{d}} \lesssim \frac{\delta^{s-1}}{h} |u|_{\Wspo} \ |u|_{\Wtio}^{d-1}.
		$$
		In addition, we have 
		$$
		\left( \sum_{x_i \in \Nhi} C_2(u)^d \right)^{\frac{1}{d}}
                \lesssim  |u|_{\Wtio}^d \ \frac{1}{h},
		$$
		whence
		$$
		\delta \left( \sum_{x_i \in \Nhi} C_2(u)^d \left( \frac{h^2}{\delta^2} + \theta^2 \right)^d  \right)^{\frac{1}{d}} \lesssim |u|_{\Wtio}^d \  \frac{\delta}{h} \left( \frac{h^2}{\delta^2} + \theta^2 \right).
		$$
		Collecting the previous estimates, we end up with
		$$
		\max_{\Omega_h} \, (\uve - \interp u) \lesssim
                \delta |u|_{\Wtio} + |u|_{\Wtio}^{d-1}
                \frac{\delta}{h} \left(  |u|_{\Wspo} \delta^{s-2} +
                |u|_{\Wtio}  \left( \frac{h^2}{\delta^2} + \theta^2 \right) \right).
		$$
		To find an optimal relation among $h,\delta$ and
                $\theta$, we first choose $\theta^2 =
                \frac{h^2}{\delta^2}$ and next equate the two terms in
                the second summand to obtain
		$$
		\delta = R(u) \ h^{\frac{2}{s}},
		\quad
                \theta = R(u)^{-1} \ h^{1-\frac{2}{s}},
		$$
		whence
		$$
		\max_{\Omega_h} \, (\uve - \interp u) \lesssim
                R(u) |u|_{\Wtio} h^{\frac{2}{s}} + 
                |u|_{\Wspo}^{\frac{1}{s}} \ |u|_{\Wtio}^{d-\frac{1}{s}}
                h^{1-\frac{2}{s}}.
		$$
 Adding the interpolation error estimate $\|u-\interp u\|_{L^\infty(\Omega)}
 \lesssim h^2 |u|_{\Wtio}$, and using that $2 > \frac{2}{s} \ge 1 -
 \frac{2}{s}$ for $2<s\le 4$, leads to the asserted estimate.
	\end{proof}
	
	The error estimate of \Cref{T:RatesSobolev} (convergence rate for $W_p^s$-solutions) is of order $\frac{1}{2}$ for $s=4$ and $u \in W_p^4(\Omega)$ with $p > d$. This rate requires much weaker regularity than the corresponding error estimate in \Cref{T:RatesHolder}, namely $u \in C^{3,1}(\overline{\Omega}) = W^4_{\infty}(\Omega)$. In both cases, the relation between $\delta$ and $h$ is $\delta \approx h^{\frac{1}{2}}$.

	%-----------------------------------------------------------------------------------
	\subsection{Error Estimates for Piecewise Smooth Solutions} \label{S:RatesPW}
	%-----------------------------------------------------------------------------------
	
	We now derive pointwise rates of convergence for a larger class	of solutions than in \Cref{S:RatesSobolev}. These are viscosity
	solutions which are piecewise $W_p^s$ but have discontinuous Hessians
	across a Lipschitz $(d-1)$-dimensional manifold $\cS$;
	we refer to the second numerical example in \cite{NoNtZh}.
	Since $T_\varepsilon[\interp u]$ has a
          consistency error of order one in a $\delta$-region around
          $\cS$, due to the discontinuity of $D^2u$, we 
          exploit the fact that the measure of this region is
          proportional to  $\delta|\cS|$. We are thus able to adapt the argument of
          \Cref{T:RatesSobolev} (convergence rate for $W^s_p$ solutions),
          and accumulate such consistency error in $l^d$,
          at the expense of an extra additive term of order
          $h^{-1}\delta^{1+\frac{1}{d}}$. This term is responsible for
          a reduced convergence rate when $u \in W^s_p(\Omega\setminus\cS)$,
          $s > 2+ \frac{1}{d}$.
	\begin{Theorem}[convergence rate for piecewise smooth
            solutions] \label{T:RatesPW}
        Let $\cS$ denote a $(d-1)$-dimensional Lipschitz manifold
        that divides $\Omega$ into two disjoint subdomains $\Omega_1, \Omega_2$
        so that $S = \overline{\Omega}_1 \cap \overline{\Omega}_2$.
Let $f \geq f_0>0$ in $\Omega$ and let $u \in W_p^s(\Omega_i) \cap
\Wtio$, for $i=1,2$ and $\frac{d}{p} < s-2-k \le 1, k=0,1$,
be the viscosity solution of (\ref{E:MA}). If $\uve$ denotes the
discrete solution of (\ref{E:2ScOp}), then for $\beta = \min\{s,2+\frac{1}{d}\}$
we have
                $$
                \| u -\uve\|_{L^{\infty}(\Omega_h)} \leq
                C(d,\Omega,f_0) \left( R(u)^{-1} |u|_{\Wtio}^d +
                \big(1+ R(u) \big) |u|_{\Wtio}
                 \right) h^{1-\frac{2}{\beta}},
		$$
		with
                $R(u) = \Big(\frac{|u|_{\Wtio}}{|u|_{W_p^s(\Omega\setminus \cS)}+{|u|_{\Wtio}}}\Big)^{\frac{1}{\beta}}$
                and $|u|_{W_p^s(\Omega \setminus \cS)}:= \max_i |u|_{W_p^s(\Omega_i)}$,
                provided
		\[
		\delta = R(u) \ h^{\frac{2}{\beta}},
		\quad
		\theta= R(u)^{-1} \ h^{1-\frac{2}{\beta}}.
		\]
	\end{Theorem}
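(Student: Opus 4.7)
The plan is to mimic the proof of \Cref{T:RatesSobolev} (convergence rate for $W_p^s$ solutions), splitting the interior nodes according to their distance to the singular manifold $\cS$ and exploiting the small measure of the $\delta$-tube around $\cS$. The boundary estimate of Step 1 in \Cref{T:RatesHolder} applies verbatim because it only uses $u \in \Wtio$ and the barrier of \Cref{L:Barrier}: for every $z \in \Nhi$ with ${\rm dist}(z,\partial\Omega_h)\le\delta$ we obtain $\uve(z)-\interp u(z) \le C |u|_{\Wtio} \delta$, and hence $\dep := \interp u - \uve + C|u|_{\Wtio}\delta \ge 0$ on $\partial\Omega_{h,\delta}$, ready for the continuous dependence estimate.

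Next I would partition $\Nhi \cap \Omega_{h,\delta}$ into a singular set $\Nhi^{\cS} := \{x_i : {\rm dist}(x_i,\cS) \le \hat\delta\}$ and its complement $\Nhi^{\rm reg}$. On $\Nhi^{\rm reg}$ the ball $B_i$ in \eqref{E:Bi} is contained entirely in one of the smooth pieces $\Omega_1$ or $\Omega_2$, so \Cref{L:FullConsistency} together with the Sobolev embedding $W^s_p(B_i)\hookrightarrow C^{k,\alpha}(B_i)$ (with $\alpha = s-2-k-d/p$, as in \Cref{T:RatesSobolev}) yields the consistency error $C_1(u)\delta^{k+\alpha} + C_2(u)(h^2/\delta^2 + \theta^2)$ in terms of $|u|_{W^s_p(\Omega\setminus\cS)}$ and $|u|_{\Wtio}$. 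On $\Nhi^{\cS}$ I would invoke only the $\Wti$ version of \Cref{L:FullConsistency}, which gives the crude consistency bound $C|u|_{\Wtio}^d + C_2(u)(h^2/\delta^2 + \theta^2)$.

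The crucial ingredient is the cardinality count $\#\Nhi^{\cS} \lesssim h^{-d}\cdot\delta|\cS|$, since the $\delta$-tube around a $(d-1)$-dimensional Lipschitz manifold has volume $\lesssim \delta|\cS|$. Applying \Cref{P:ContDep} to $\dep$ on $\Omega_{h,\delta}$ and accumulating the interior consistency errors in $l^d$ as in \Cref{T:RatesSobolev}, the regular contribution reproduces the bound of \Cref{T:RatesSobolev} (with the norm $|u|_{W^s_p(\Omega\setminus\cS)}$), while the singular contribution yields the additional term
\[
\delta \left(\sum_{x_i\in\Nhi^{\cS}} |u|_{\Wtio}^d\right)^{1/d} \lesssim \delta\,\big(\delta h^{-d}\big)^{1/d}\,|u|_{\Wtio} = \frac{\delta^{1+1/d}}{h} |u|_{\Wtio}.
\]
Collecting everything, the total bound is of the form $\delta|u|_{\Wtio} + (\delta^{s-1}/h)|u|_{W^s_p(\Omega\setminus\cS)}|u|_{\Wtio}^{d-1} + (h/\delta)|u|_{\Wtio}^d + (\delta^{1+1/d}/h)|u|_{\Wtio}$ (after setting $\theta^2 = h^2/\delta^2$).

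The last step is to optimize $\delta$. Balancing the dominant Sobolev/singular term with $h/\delta$ identifies two regimes: if $s\le 2+1/d$ the Sobolev term dominates and one chooses $\delta^s \approx h^2$, while if $s > 2+1/d$ the singular term dominates and one chooses $\delta^{2+1/d} \approx h^2$. Both cases unify as $\delta = R(u) h^{2/\beta}$ with $\beta = \min\{s, 2+\tfrac1d\}$ and $R(u)$ as stated, yielding the rate $h^{1-2/\beta}$ after adding the interpolation error $\|u-\interp u\|_{L^\infty(\Omega)}\lesssim h^2 |u|_{\Wtio}$. The reverse estimate $\max(\interp u - \uve)^{-}$ is symmetric and omitted. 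The main obstacle I anticipate is cleanly performing the $l^d$ accumulation over $\Nhi^{\rm reg}$ exactly as in \Cref{T:RatesSobolev} while tracking the new singular term, and verifying that the unified choice $\beta=\min\{s,2+1/d\}$ correctly interpolates the two regimes in the final balance.
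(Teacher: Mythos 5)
Your proposal takes essentially the same approach as the paper: split the interior into a $\delta$-tube around $\cS$ and the regular region, use the crude $W^2_\infty$ consistency on the tube and Sobolev consistency outside it, exploit the $\lesssim \delta|\cS|h^{-d}$ cardinality of the tube in the $l^d$ accumulation from \Cref{P:ContDep}, and unify the two regimes through $\beta=\min\{s,2+\tfrac1d\}$. The only slip is in the singular contribution: since each per-node consistency error in the tube is $\lesssim C_2(u)=C|u|_{\Wtio}^d$, the $l^d$ accumulation gives $\delta\,(\#\text{tube nodes})^{1/d}\,|u|_{\Wtio}^{d}$, not $|u|_{\Wtio}$, so the additional term should read $\frac{\delta^{1+1/d}}{h}\,|u|_{\Wtio}^{d}$, which is precisely the source of the $R(u)^{-1}|u|_{\Wtio}^d$ coefficient in the theorem.
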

	\begin{proof}
	  We proceed as in Theorems \ref{T:RatesHolder} and
          \ref{T:RatesSobolev}. The
		boundary layer estimate relies on the regularity $u \in \Wtio$ which
		is still valid, whence for all $x \in \Omega_h$
                such that ${\rm dist}(x,\partial \Omega_h) \leq \delta$ 
                we obtain
                $$
		\uve(x) - \interp u(x) \leq C |u|_{\Wtio} \delta.
		$$
		Consider now the internal layer
		\begin{equation*}
		\cS^{\delta}_h : = \left\{  x \in \Omega_h : \ {\rm dist}(x,\cS) \leq \delta  \right\},
		\end{equation*}
		which is the region affected by the discontinuity of
                the Hessian $D^2u$. Recall the auxiliary function
                $\dep = \interp u - \uve + C |u|_{\Wtio} \delta$
                of \Cref{T:RatesHolder} (rates of convergence for
                classical solutions) and split the contact set $\cC_-^\delta(\dep):= \cC_-(\dep) \cap \Omega_{h,\delta}$ as follows:
		\begin{equation*}
		\cS_{h,1}^{\delta} := \cC_-^\delta(\dep) \cap \cS^{\delta}_h,
		\quad \cS_{h,2}^{\delta}:= \cC_-^\delta(\dep) \setminus \cS^{\delta}_h.
		\end{equation*}
                An argument similar to Step 2 (interior estimate)
                of \Cref{T:RatesHolder}, based on combining
		\Cref{P:ContDep} (continuous dependence on data)
		and \Cref{L:FullConsistency} (consistency of $T_\varepsilon[\interp u]$)
                with assumption $f\ge f_0>0$, yields
		$$
		\begin{aligned}
		\max_{\Omega_{h,\delta}} \ \dep^- &\lesssim
                \delta \left(  \sum_{x_i \in \cS_{h,1}^{\delta}} C_2(u)^d
		\right)^{1/d}
		\\ & + \delta \ \left(  \sum_{x_i \in \cS_{h,2}^{\delta}} C_1(u)^d \delta^{(k+\alpha)d} + C_2(u)^d \left(  \frac{h^2}{\delta^2} + \theta^2  \right)^d \right)^{1/d}
		=: I_1 + I_2,
		\end{aligned}
		$$
		because the consistency error in $\cS_{h,1}^{\delta}$ is bounded by
		$C_2(u) = C |u|_{W^2_\infty(B_i)}^d$. As in
                \Cref{T:RatesSobolev} (convergence rate for $W^s_p$ solutions),
		$C_1(u)$ satisfies
                \[
                C_1(u) \lesssim |u|_{W^s_p(B_i)} |u|_{W^2_\infty(B_i)}^{d-1}.
                \]
		Since the number of nodes $x_i \in \cS_{h,1}^{\delta}$ is bounded by $C|\cS|\delta h^{-d}$, we deduce
		$$
		I_1 \lesssim \delta \left( \sum_{x_i \in \cS_{h,1}^{\delta}} C_2(u)^d  \right)^{1/d} \lesssim \ |u|_{\Wtio}^d \frac{\delta^{1+\frac{1}{d}}}{h}.
		$$
		For $I_2$ we distinguish whether $x_i$ belongs to
                $\Omega_1$ or $\Omega_2$ and accumulate $C_1(u)$ in
                $\ell^p$, exactly as in \Cref{T:RatesSobolev}, to obtain
		$$
		I_2 \lesssim \ |u|_{\Wtio}^{d-1} \left(  |u|_{W_p^s(\Omega \setminus \cS ) }\frac{\delta^{s-1}}{h} + |u|_{\Wtio} \ \frac{\delta}{h} \ \left( \frac{h^2}{\delta^2} + \theta^2  \right)  \right).
		$$
		Collecting the previous estimates and using the
                definition of $\beta$ yields 
		\begin{align*}
		\max_{\Omega_h}  (\uve &- \interp u) \lesssim |u|_{\Wtio} \delta
		\\
		& + \ |u|_{\Wtio}^{d-1} \frac{\delta}{h} \left(
		\Big(|u|_{W_p^s(\Omega \setminus \cS )}
                + |u|_{\Wtio}\Big)\delta^{\beta-2}  + |u|_{\Wtio}
		\ \left( \frac{h^2}{\delta^2} + \theta^2 \right)  \right).
		\end{align*}
		We finally realize that this estimate is similar
                to that in the proof of \Cref{T:RatesSobolev} except for the
                middle term on the right-hand side. Therefore, we
                proceed as in \Cref{T:RatesSobolev} to find the
                relation between $\delta, \theta$ and $h$, add the
                estimate $\|u-\interp u\|_{L^\infty(\Omega)}\lesssim h^2 |u|_{\Wtio}$,
                and eventually derive the asserted error estimate.
	\end{proof}

			%%%%%%%%%%%%%%%%%%%%%%%%%%%%%%%%%%%%%%%%%%%%%%%%%%%%%%%%%%%%%%%%%%%	
			\subsection{Error Estimates for Piecewise Smooth Solutions with Degenerate $f$} \label{S:RatesDegen}
			%%%%%%%%%%%%%%%%%%%%%%%%%%%%%%%%%%%%%%%%%%%%%%%%%%%%%%%%%%%%%%%%%%%		
	
			We observe that in all three preceding theorems we assume that $f \geq f_0 >0$.
			This is an important assumption in the proofs, since it allows us to use
			the concavity of $t\mapsto t^{1/d}$ and \Cref{P:ContDep} (continuous dependence on data) 
			to obtain 
			\begin{equation}\label{E:fconcavity}
			(f(x_i)+e)^{1/d} -f(x_i)^{1/d} \leq \frac{e}{df_0^{\frac{d-1}{d}}}, 
			\end{equation}
			where $e$ is related to the consistency of the operator in
			\Cref{L:FullConsistency} (consistency of $T_{\varepsilon}[\interp u]$).
			We see that this is only possible if $f_0 >0$. 
			If we allow $f$ to touch zero, then \eqref{E:fconcavity}
			reduces to 
			\begin{equation}\label{E:fconcavitydegen}
			(f(x_i)+e)^{1/d} -f(x_i)^{1/d} \leq e^{1/d}, 
			\end{equation}
			with equality for $f(x_i)=0$. This leads to a
                        rate of order $\big(\frac{\delta}{h}\big)^{1-\frac{2}{d}}\ge1$
                        for $d \geq 2$.
			To circumvent this obstruction, we use \Cref{L:BarrierInterior}
                        (interior barrier function) which allows us
			to introduce an extra parameter
                        $\sigma>0$ that
compensates for the lack of lower bound $f_0>0$ and yields
			pointwise error estimates of reduced order.
			
\begin{Theorem}[degenerate forcing $f\ge0$]\label{T:RatesDegen}
Let $\cS$  denote a $(d-1)$-dimensional
Lipschitz manifold that divides $\Omega$ into
two disjoint subdomains $\Omega_1, \Omega_2$ such that
$\cS = \overline{\Omega}_1 \cap \overline{\Omega}_2$.  Let $f \geq 0$ in
$\Omega$ and let $u \in W_p^s(\Omega_i) \cap \Wtio$, for $i=1,2$
and $\frac{d}{p} < s-2-k \le 1, k=0,1$, be the viscosity solution
of (\ref{E:MA}). If $\uve$ denotes the discrete solution of
(\ref{E:2ScOp}), then for $\beta = \min\{ s, 2+\frac{1}{d}\}$ we have
$$
\| u -\uve\|_{L^{\infty}(\Omega_h)} \leq C(d,\Omega)
|u|_{\Wtio} \Big( 1+ R(u) + R(u)^{-\frac{1}{d}} \Big)
 \ h^{\frac{1}{d}\left(1-\frac{2}{\beta}\right)}
$$
   with
$R(u) = \Big(\frac{|u|_{\Wtio}}{|u|_{W_p^s(\Omega\setminus \cS)}+|u|_{\Wtio}}\Big)^{\frac{1}{\beta}}$
and
$|u|_{W_p^s(\Omega \setminus \cS)}:= \max_i |u|_{W_p^s(\Omega_i)}$,
provided
\[
\delta = R(u) \ h^{\frac{2}{\beta}},
\quad
\theta= R(u)^{-1} \ h^{1-\frac{2}{\beta}}.
\]
			\end{Theorem}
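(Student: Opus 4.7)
The strategy is to reduce the degenerate case $f\ge 0$ to the non-degenerate framework of \Cref{T:RatesPW} by perturbing the forcing to recover a strict lower bound, and using the interior barrier $q_h$ of \Cref{L:BarrierInterior} to control the resulting error. Fix $\sigma > 0$ and set $f_\sigma := f + \sigma^d \geq \sigma^d > 0$, which inherits the piecewise $W^{s-2}_p$ regularity of $f$. Let $u_\sigma$ denote the viscosity solution of $\det D^2 u_\sigma = f_\sigma$ in $\Omega$ with $u_\sigma = g$ on $\partial\Omega$, and $u^\varepsilon_\sigma\in\Vh$ the discrete solution of $T_\varepsilon[u^\varepsilon_\sigma] = f_\sigma$ with $u^\varepsilon_\sigma = g$ on $\partial\Omega_h$, which exists by the theory of \cite{NoNtZh}. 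Regularity theory (Caffarelli-type) gives $u_\sigma$ piecewise $W^s_p\cap\Wtio$ with norms controlled by those of $u$.

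The argument has three ingredients. First, I apply \Cref{T:RatesPW} to the pair $(u_\sigma, u^\varepsilon_\sigma)$ with $f_0 = \sigma^d$, carefully tracking the explicit dependence on $f_0$. The factor $f_0^{-(d-1)/d} = \sigma^{-(d-1)}$ arises from the concavity bound $(f_\sigma + e)^{1/d} - f_\sigma^{1/d}\le e/(d f_0^{(d-1)/d})$ used in the interior estimate, and yields
\begin{equation*}
\|u_\sigma - u^\varepsilon_\sigma\|_{L^\infty(\Omega_h)} \lesssim \sigma^{-(d-1)}\, K(u)\, h^{1-2/\beta},
\end{equation*}
where $K(u):= |u|_{\Wtio}\big(R(u)^{-1}|u|_{\Wtio}^{d-1} + 1 + R(u)\big)$, for $\delta = R(u)h^{2/\beta}$ and $\theta = R(u)^{-1}h^{1-2/\beta}$. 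Second, I use the barrier to establish $O(\sigma)$ comparisons on both levels. At the continuous level, \Cref{L:Concavity} gives $\det D^2(u+\sigma q) = \det(D^2u+\sigma I) \geq (f^{1/d}+\sigma)^d \geq f_\sigma = \det D^2 u_\sigma$, while $u + \sigma q \leq g = u_\sigma$ on $\partial\Omega$ (since $q\le 0$), so the continuous comparison principle yields $u + \sigma q \leq u_\sigma \leq u$, hence $\|u-u_\sigma\|_{L^\infty(\Omega)} \leq C(\Omega)\sigma$. At the discrete level, \Cref{C:OperIneq} together with $T_\varepsilon[q_h] \geq 1$ gives $T_\varepsilon[\uve + \sigma q_h]^{1/d} \geq f^{1/d}+\sigma$, so $T_\varepsilon[\uve+\sigma q_h] \geq (f^{1/d}+\sigma)^d \geq f_\sigma = T_\varepsilon[u^\varepsilon_\sigma]$; combined with $\uve+\sigma q_h \leq u^\varepsilon_\sigma$ on $\partial\Omega_h$, \Cref{L:DCP} gives $\uve+\sigma q_h \leq u^\varepsilon_\sigma$, and its converse $u^\varepsilon_\sigma \leq \uve$ follows from $f_\sigma \geq f$, identical boundary data, and another application of \Cref{L:DCP}. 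Together these produce $0\le\uve-u^\varepsilon_\sigma\le -\sigma q_h\le C(\Omega)\sigma$.

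Third, the triangle inequality yields
\begin{equation*}
\|u-\uve\|_{L^\infty(\Omega_h)} \lesssim \sigma + \sigma^{-(d-1)} K(u) \, h^{1-2/\beta},
\end{equation*}
and balancing the two terms by setting $\sigma^d \sim K(u)\, h^{1-2/\beta}$ produces the optimal $\sigma \sim R(u)^{-1/d}|u|_{\Wtio} h^{(1-2/\beta)/d}$, with the rate $h^{(1-2/\beta)/d}$ and the stated coefficient $|u|_{\Wtio}(1+R(u)+R(u)^{-1/d})$. The principal obstacle I anticipate is technical bookkeeping on two fronts: extracting the $\sigma^{-(d-1)}$ dependence cleanly throughout the proof of \Cref{T:RatesPW} (distinguishing $f_0$-dependent constants from $u$-dependent ones), and justifying that the piecewise regularity norms of $u_\sigma$ remain controlled in terms of those of $u$ uniformly in $\sigma$, which is the most delicate point since Caffarelli-type constants for nondegenerate Monge-Amp\`ere typically carry a $\min f^{-1}$ dependence that must not propagate into the final bound.
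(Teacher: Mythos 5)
Your approach is genuinely different from the paper's. You regularize at the \emph{continuous} level: introduce $f_\sigma=f+\sigma^d$, solve the perturbed Monge--Amp\`ere problem for $u_\sigma$ and its discrete counterpart $u^\varepsilon_\sigma$, apply the nondegenerate \Cref{T:RatesPW} (tracking the $f_0^{-(d-1)/d}$ dependence) to estimate $\|u_\sigma-u^\varepsilon_\sigma\|$, and then use barrier/comparison arguments on both the continuous and discrete levels to show $\|u-u_\sigma\|$ and $\|\uve-u^\varepsilon_\sigma\|$ are $O(\sigma)$, before optimizing $\sigma$. The paper instead regularizes purely at the \emph{discrete} level: it adds $\sigma q_h$ directly to $\uve$ (respectively to $\interp u$) inside the continuous-dependence argument, so that $T_\varepsilon[\uve+\sigma q_h]\ge f+\sigma^d$ supplies the positive lower bound needed for the concavity estimate \eqref{E:fconcavity}. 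No perturbed PDE is ever solved; the only regularity ever used is that of the given $u$.

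This difference is not cosmetic, and it exposes a genuine gap in your proposal, which you partly flag yourself. Applying \Cref{T:RatesPW} to $(u_\sigma,u^\varepsilon_\sigma)$ requires $u_\sigma\in W^s_p(\Omega_i)\cap \Wtio$ with $|u_\sigma|_{W^s_p(\Omega\setminus\cS)}$ and $|u_\sigma|_{\Wtio}$ bounded \emph{uniformly in $\sigma$}. This is not available: Caffarelli-type interior $W^{2,p}$ (or $C^{2,\alpha}$) estimates for nondegenerate Monge--Amp\`ere carry constants depending on $(\inf f_\sigma)^{-1}=\sigma^{-d}$ and thus blow up as $\sigma\to0$; moreover there is no reason the singular structure of $u_\sigma$ (where its Hessian is discontinuous) should align with the fixed manifold $\cS$ of $u$, nor that $u_\sigma$ is even globally $W^2_\infty$ with a $\sigma$-independent bound. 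Without such uniform estimates, the factor $\sigma^{-(d-1)}K(u_\sigma)$ you would actually obtain is not controlled by $\sigma^{-(d-1)}K(u)$, and the final optimization over $\sigma$ does not close. The paper's device of perturbing $\uve$ (and $\interp u$) by $\sigma q_h$ sidesteps the problem entirely: the discrete continuous-dependence estimate (\Cref{P:ContDep}) only sees $T_\varepsilon[\cdot]$ of the given discrete functions, so the consistency constants $C_1,C_2$ in \Cref{L:FullConsistency} involve only the original $u$, never $u_\sigma$. To repair your route you would need to supply a uniform-in-$\sigma$ piecewise $W^s_p\cap\Wtio$ regularity theorem for the perturbed problem, which is a substantial PDE input absent from the paper; alternatively, abandon the continuous regularization and move the $\sigma$-perturbation to the discrete level as the paper does. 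The secondary bookkeeping issues you raise (which terms of \Cref{T:RatesPW} pick up the $\sigma^{1-d}$ factor, the precise form of $K(u)$) are real but minor compared to this regularity obstruction.
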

			\begin{proof}
			We employ the interior barrier function $q_h$
                        of \Cref{L:BarrierInterior} scaled by
                        a parameter  $\sigma>0$  to control
                        $\uve-\interp u$ and $\interp u -\uve$ in
                        two steps. The parameter $\sigma$
                        allows us to mimic the calculation
                        in \eqref{E:fconcavity}. In the third step we
                        choose $\sigma$ optimally with respect to
                        the scales of our scheme.
	
			\medskip
			\textit{Step 1: Upper bound for
                            $\uve-\interp u$.} We let $w_h:= \uve + \sigma q_h$
                        and $v_h:= \interp u +C|u|_{\Wtio}
                        \delta$, observe that $T_\varepsilon[w_h](x_i)
                        \ge f(x_i) + \sigma^d$, and proceed as in Step
                        1 of \Cref{T:RatesHolder} to show
			$
			w_h(z) \leq v_h(z)
			$
                        for all $z\in\Nhi$ such that
                        $\textrm{dist} (z,\partial\Omega_h) \le \delta$.
                       
We now focus on $\Omega_{h,\delta}$ and 
define the auxiliary function $\dep:= v_h -w_h$ and contact set 
$\cC_-^\delta(\dep) := \cC_-(\dep) \cap \Omega_{h,\delta}$.
Since the previous argument guarantees that $\dep \geq 0$ on $\partial \Omega_{h,\delta}$, 
			\Cref{P:ContDep} (continuous dependence on data) gives
			$$
			\max_{\Omega_{h,\delta}} \dep^- \lesssim \
                        \delta \left( \sum_{x_i \in
                          \cC_-^\delta(\dep)}  \left(
                        \big(T_\varepsilon[v_h](x_i)\big)^{1/d}-
                        \big(T_\varepsilon[w_h](x_i)\big)^{1/d}
                        \right)^d   \right)^{1/d} .
                        $$
If $e_i$ is the local
consistency error given in \Cref{L:FullConsistency}, we further note that
$$
T_\varepsilon[v_h](x_i) \le f(x_i) + e_i,
\quad
T_\varepsilon[w_h](x_i) \geq T_\varepsilon[\uve](x_i) + T_\varepsilon[\sigma q_h](x_i) \geq f(x_i)+ \sigma^d
$$
for all $x_i\in\Nhi$, whence
$$
\begin{aligned}
\max_{\Omega_{h,\delta}} \dep^- &\lesssim \delta \left( \sum_{x_i \in \cC_-^\delta(\dep)}  \left( \big(f(x_i)+e_i\big)^{1/d}- \big(f(x_i)+\sigma^d\big)^{1/d} 
			\right)^d   \right)^{1/d}.
\end{aligned}
$$
We now observe that $e_i \geq \sigma^d$ for all
$x_i \in \cC_-^\delta(\dep)$
because all terms in the above sum are non-negative.
 If there is no such $x_i$, then the above bound implies that $\dep^-=0$
 and $w_h \leq v_h$, whence $\uve - \interp u \lesssim \sigma +
 |u|_{\Wtio} \delta$.
 Otherwise, the above observation combined with \eqref{E:fconcavity}
 and $f(x_i)\ge0$ implies
$$
\begin{aligned}
\big(f(x_i)+e_i \big)^{1/d} &- \big(f(x_i)+\sigma^d \big)^{1/d} 
\\ &=
\big(f(x_i)+\sigma^d+(e_i-\sigma^d)\big)^{1/d}- \big(f(x_i)+\sigma^d\big)^{1/d} \\
			&\leq \frac{e_i-\sigma^d}{d \sigma^{d\frac{d-1}{d}}} \leq d^{-1} \sigma^{1-d} e_i.
\end{aligned}
$$
We next proceed exactly as in \Cref{T:RatesPW}
(convergence rate for piecewise smooth solutions) to derive an upper
bound for $\dep^-$, but with the additional factor $\sigma^{1-d}$.
Employing the definition of $\dep$, we thereby obtain
$$
\uve - \interp u \lesssim \sigma  + |u|_{\Wtio} \delta +  \sigma^{1-d} \ \frac{\delta}{h} \left(  C_1(u) \delta^{s-2} + C_2(u) \  \left( \delta^{1/d} +  \frac{h^2}{\delta^2} + \theta^2  \right)  \right),
$$
where $C_1(u) = C |u|_{W_p^s(\Omega  \setminus \cS)} |u|_{\Wtio}^{d-1}$ and
$C_2(u)=C|u|_{\Wtio}^d$.

			 \medskip
			 \textit{Step 2: Lower bound for $\uve-\interp u$.}
			 To prove the reverse inequality, we
                         proceed as in Step 1, except that this time we 
			 define $v_h:= \uve + C |u|_{\Wtio} \delta$
                         and $w_h:= \interp u + \sigma q_h$. An
                         argument similar to Step 1 yields $w_h \leq
                         v_h$ in $\omega_{h,\delta}$. Moreover,
                         recalling \Cref{L:FullConsistency}
                         (consistency of $T_\varepsilon[\interp u]$) we have for all $x_i\in\Nhi$
\[
T_\varepsilon[v_h](x_i) = f(x_i) \le
T_\varepsilon[\interp u](x_i) + e_i,
\quad
T_\varepsilon[w_h](x_i) \ge T_\varepsilon[\interp u](x_i) + \sigma^d,
\]
where $e_i$ is a local bound for the consistency error.
Combining this with \Cref{P:ContDep} (continuous dependence on data) in $\Omega_{h,\delta}$ gives
\[
\max_{\Omega_{h,\delta}} d_\varepsilon^- \lesssim \delta
\left( \sum_{x_i\in\cC_-^\delta(d_\varepsilon^-)} \Big(
\big(T_\varepsilon[\interp u](x_i) + e_i\big)^{\frac{1}{d}}
- \big( T_\varepsilon[\interp u](x_i) + \sigma^d \big)^{\frac{1}{d}}
\Big)^d\right)^{\frac{1}{d}} ,
\]
Since $\interp u$ is discretely convex, we apply \Cref{L:DisConv}
(discrete convexity) to deduce $T_\varepsilon[\interp u](x_i)\ge 0$
and next argue as in Step 1 to obtain

$$
\interp u - \uve \lesssim \sigma + |u|_{\Wtio} \delta + \sigma^{1-d} \ \frac{\delta}{h} \left(  C_1(u) \delta^{s-2} + C_2(u) \  \left( \delta^{1/d} +  \frac{h^2}{\delta^2} + \theta^2  \right)  \right).
$$		 
			 \medskip
			 \textit{Step 3: Choice of $\delta, \theta$ and $\sigma$.}
                         Since $\|u-\interp u\|_{L^{\infty}(\Omega_h)}\leq
C|u|_{\Wtio} h^2$, combining Steps 1 and 2 yields
\begin{align*}
  \|\uve - u\|_{L^\infty(\Omega_h)} &\lesssim
  \sigma + |u|_{\Wtio} (\delta + h^2) \\
  & + \sigma^{1-d} \ \frac{\delta}{h} \left(  C_1(u) \delta^{s-2} + C_2(u) \  \left( \delta^{1/d} +  \frac{h^2}{\delta^2} + \theta^2  \right)  \right) .
\end{align*}
We now minimize the right-hand side upon choosing
$\delta, \theta$ and $\sigma$ suitably with respect to $h$.
We first recall the definition of $\beta$ and
choose $\delta$ and $\theta$ as in \Cref{T:RatesPW}.
At this stage it only remains to find $\sigma$ upon solving
$$
\sigma = C_2(u) \sigma^{1-d} \frac{h}{\delta} = C \sigma^{1-d}
|u|_{\Wtio}^d R(u)^{-1} \ h^{1-\frac{2}{\beta}},
$$
which leads to 
$$
 \sigma = |u|_{\Wtio} R(u)^{-\frac{1}{d}} \ h^{\frac{1}{d}\left(1-\frac{2}{\beta}\right)}.
$$
Since $\beta>2$ we get $h^2 + \delta \le \big(1 +
R(u)^{-\frac{1}{\beta}}\big) h^{\frac{2}{\beta}}$ and
\[
\|\uve - u\|_{L^\infty(\Omega_h)} \lesssim 
|u|_{\Wtio} \big(1 + R(u) \big) h^{\frac{2}{\beta}}
+ |u|_{\Wtio} R(u)^{-\frac{1}{d}} \ h^{\frac{1}{d}\left(1-\frac{2}{\beta}\right)}.
\]
This yields the asserted estimate and finishes the proof.
\end{proof}

\Cref{T:RatesDegen} is an extension of \Cref{T:RatesPW} to the
degenerate case $f\ge0$, but the same techniques and estimates
extend as well to Theorems \ref{T:RatesHolder} and \ref{T:RatesSobolev}. 
We stress that Theorems \ref{T:RatesPW} and \ref{T:RatesDegen} correspond to non-classical viscosity solutions that are of class $W^{2}_{\infty}(\Omega)$. In order to deal with discontinuous Hessians and degenerate right hand sides, we rely on techniques that give rise to reduced rates. For \Cref{T:RatesPW} we obtain rates that depend on the space dimension, whereas for \Cref{T:RatesDegen} we resort to a regularization procedure that leads to further reduction of the rates. Although the derived estimates are suboptimal with respect to the computational rates observed in \cite{NoNtZh}, we wish to emphasize that \Cref{T:RatesDegen} is, to our knowledge, the only error estimate available in the literature that deals with degenerate right hand sides.

%%%%%%%%%%%%%%%%%%%%%%%%%%%%%%%%%%%%%%%%%%%%%%%%%%%%%%%%%%%%%%%%%%%%
\section{Conclusions}
%%%%%%%%%%%%%%%%%%%%%%%%%%%%%%%%%%%%%%%%%%%%%%%%%%%%%%%%%%%%%%%%%%%%
	In this paper we extend the analysis of the two-scale method
        introduced in \cite{NoNtZh}. We derive
	continuous dependence of discrete solutions on data and use it to prove rates of 
	convergence in the $L^{\infty}$ norm in the computational domain $\Omega_h$ 
	for four different
        cases. We first prove rates of order up to $h^{1/2}$ for
        smooth classical solutions with H\"older regularity. We then
        exploit the structure of the continuous dependence
        estimate of discrete solutions on data to
        derive error estimates for classical solutions with Sobolev regularity, 
	thereby achieving the same rates under weaker regularity
        assumptions. In a more general scenario, we derive error estimates
	for viscosity solutions with discontinuous Hessian across a
        surface with appropriate smoothness, but otherwise possessing
        piecewise Sobolev regularity.
        Lastly, we use an
	interior barrier function that allows us to remove the
        nondegeneracy assumption
	$f > 0$ at the cost of a reduced rate that depends
        on dimension. Our theoretical
	predictions are sub-optimal with respect to the linear rates
        observed experimentally in 
	\cite{NoNtZh} for a smooth classical solution and a
        piecewise smooth viscosity solution with 
	degenerate right-hand side $f\ge 0$. This can be
        attributed to the fact that the continuous dependence estimate
        of discrete solutions on data introduces a factor $\frac{\delta}{h} \gg 1$
        in the error estimates. This feature is similar to the
        discrete ABP estimate developed in \cite{KuoTru} and is the
        result of using sets of measure $\approx \delta^d$ instead of
        $\approx h^d$ to approximate subdifferentials. In a
        forthcoming paper we will tackle this issue and connect our
        two-scale method with that of Feng and Jensen \cite{FeJe:16}.

	%%%%%%%%%%%%%%%%%%%%%%%%%%%%%%%%%%%%%%%%%%%%%%%%%%%%%%%%%%%%%%%%%%%%%%%%%%%%%%%%%%%%
	\bibliographystyle{amsplain}
	%%%%%%%%%%%%%%%%%%%%%%%%%%%%%%%%%%%%%%%%%%%%%%%%%%%%%%%%%%%%%%%%%%%%%%%%%%%%%%%%%%%%

%%%%%%%%%%%%%%%%%%%%%%%%%%%%%%%%%%%%%%%%%%%%%%%%%%%%%%%%%%%%%%%%%%%%%%%%%%%%%%%%%
\end{document}